\definecolor{labelkey}{rgb}{0.6,0,0}
\newtheorem{theorem}{Theorem}[section]
\newtheorem{lemma}[theorem]{Lemma}
\newtheorem{proposition}[theorem]{Proposition}
\theoremstyle{definition}
\theoremstyle{remark}
\newtheorem{remark}{Remark}[section]
\renewcommand \theequation {%
\ifnum \c@section>\z@ \@arabic\c@section.%
\fi\@arabic\c@equation} \@addtoreset{equation}{section}
\renewcommand \theequation {%
\ifnum \c@section>\z@ \@arabic\c@section.%
\fi\@arabic\c@equation} \@addtoreset{equation}{section}
\DeclareFontFamily{OMX}{MnSymbolE}{}
\DeclareSymbolFont{MnLargeSymbols}{OMX}{MnSymbolE}{m}{n}
\DeclareFontShape{OMX}{MnSymbolE}{m}{n}{
    <-6>  MnSymbolE5
   <6-7>  MnSymbolE6
   <7-8>  MnSymbolE7
   <8-9>  MnSymbolE8
   <9-10> MnSymbolE9
  <10-12> MnSymbolE10
  <12->   MnSymbolE12
}{}
\DeclareFontShape{OMX}{MnSymbolE}{b}{n}{
    <-6>  MnSymbolE-Bold5
   <6-7>  MnSymbolE-Bold6
   <7-8>  MnSymbolE-Bold7
   <8-9>  MnSymbolE-Bold8
   <9-10> MnSymbolE-Bold9
  <10-12> MnSymbolE-Bold10
  <12->   MnSymbolE-Bold12
}{}
\let\llangle\@undefined
\let\rrangle\@undefined
\DeclareMathDelimiter{\llangle}{\mathopen}%
    {MnLargeSymbols}{'164}{MnLargeSymbols}{'164}
\DeclareMathDelimiter{\rrangle}{\mathclose}%
    {MnLargeSymbols}{'171}{MnLargeSymbols}{'171}
\DeclareMathOperator*{\esssup}{ess\,sup}
\providecommand{\abs}[1]{\left\vert#1\right\vert}
\providecommand{\babs}[1]{\big\vert#1\big\vert}
\providecommand{\Babs}[1]{\Big\vert#1\Big\vert}
\providecommand{\nm}[1]{\left\Vert#1\right\Vert}
\providecommand{\bnm}[1]{\big\Vert#1\big\Vert}
\providecommand{\nnm}[1]{\left\vert\kern-0.25ex\left\vert\kern-0.25ex\left\vert#1\right\vert\kern-0.25ex\right\vert\kern-0.25ex\right\vert}
\providecommand{\onnm}[1]{\vert\kern-0.25ex\vert\kern-0.25ex\vert#1\vert\kern-0.25ex\vert\kern-0.25ex\vert}
\providecommand{\bnnm}[1]{\big\vert\kern-0.25ex\big\vert\kern-0.25ex\big\vert#1\big\vert\kern-0.25ex\big\vert\kern-0.25ex\big\vert}
\providecommand{\Bnnm}[1]{\Big\vert\kern-0.25ex\Big\vert\kern-0.25ex\Big\vert#1\Big\vert\kern-0.25ex\Big\vert\kern-0.25ex\Big\vert}
\providecommand{\br}[1]{\left\langle #1 \right\rangle}
\providecommand{\bbr}[1]{\big\langle #1 \big\rangle}
\providecommand{\brb}[2]{\left\langle #1 \right\rangle_{#2}}
\providecommand{\bbrb}[2]{\big\langle #1 \big\rangle_{#2}}
\providecommand{\tnm}[1]{\left\Vert#1\right\Vert_{L^2}}
\providecommand{\btnm}[1]{\big\Vert#1\big\Vert_{L^2}}
\providecommand{\lnm}[1]{\left\Vert#1\right\Vert_{L^{\infty}}}
\providecommand{\tnms}[2]{\left\vert#1\right\vert_{L^2_{#2}}}
\providecommand{\lnms}[2]{\left\vert#1\right\vert_{L^{\infty}_{#2}}}
\providecommand{\dbr}[1]{\left\llangle #1 \right\rrangle}
\providecommand{\dobr}[1]{\llangle #1 \rrangle}
\providecommand{\dbbr}[1]{\big\llangle #1 \big\rrangle}
\providecommand{\dBbr}[1]{\Big\llangle #1 \Big\rrangle}
\providecommand{\pnnm}[2]{\left\vert\kern-0.25ex\left\vert\kern-0.25ex\left\vert#1\right\vert\kern-0.25ex\right\vert\kern-0.25ex\right\vert_{L^{#2}}}
\providecommand{\tnnm}[1]{\left\vert\kern-0.25ex\left\vert\kern-0.25ex\left\vert#1\right\vert\kern-0.25ex\right\vert\kern-0.25ex\right\vert_{L^{2}}}
\providecommand{\otnnm}[1]{\vert\kern-0.25ex\vert\kern-0.25ex\vert#1\vert\kern-0.25ex\vert\kern-0.25ex\vert_{L^{2}}}
\providecommand{\btnnm}[1]{\big\vert\kern-0.25ex\big\vert\kern-0.25ex\big\vert#1\big\vert\kern-0.25ex\big\vert\kern-0.25ex\big\vert_{L^{2}}}
\providecommand{\Btnnm}[1]{\Big\vert\kern-0.25ex\Big\vert\kern-0.25ex\Big\vert#1\Big\vert\kern-0.25ex\Big\vert\kern-0.25ex\Big\vert_{L^{2}}}
\providecommand{\lnnm}[1]{\left\vert\kern-0.25ex\left\vert\kern-0.25ex\left\vert#1\right\vert\kern-0.25ex\right\vert\kern-0.25ex\right\vert_{L^{\infty}}}
\providecommand{\tnnms}[2]{\left\Vert#1\right\Vert_{L^2_{#2}}}
\providecommand{\btnnms}[2]{\big\Vert#1\big\Vert_{L^2_{#2}}}
\providecommand{\lnnms}[2]{\left\Vert#1\right\Vert_{L^{\infty}_{#2}}}
\def\ud{\mathrm{d}}
\def\dt{\partial_t}
\def\p{\partial}
\def\ls{\lesssim}
\def\gs{\gtrsim}
\def\rt{\rightarrow}
\def\r{\mathbb{R}}
\def\no{\nonumber}
\def\ue{\mathrm{e}}
\def\ds{\displaystyle}
\def\S{\mathbb{S}}
\def\nx{\nabla_x}
\def\dx{\Delta_x}
\def\s{\S}
\def\e{\varepsilon}
\def\vw{w}
\def\vx{x}
\def\vn{n}
\def\u{U}
\def\tu{\widetilde{U}}
\def\bu{\overline{\u}}
\def\uu{U^B}
\def\tuu{\widetilde{U}^B}
\def\uuu{U^I}
\def\re{\mathcal{R}}
\def\bre{\overline{\re}}
\def\ire{\re-\bre}
\def\ss{\mathcal{S}}
\def\g{\mathcal{G}}
\def\ge{E[h]}
\def\h{\mathcal{H}}
\def\vr{r}
\def\vt{\varsigma}
\def\bl{\uu}
\def\bbl{\overline{\uu}}
\def\ch{\widetilde{\chi}}
\def\rp{\r^+}
\def\oo{\delta}
\def\test{\xi}
\def\Test{\zeta}
\def\bl{\Phi}
\def\bbl{\overline{\bl}}
\def\bll{\Psi}
\def\bbll{\overline{\bll}}
\def\il{\Theta}
\def\iil{\overline{\il}}
\def\vr{\mathbf{r}}
\def\mn{\mu}
\def\pl{L}
\def\ua{u_a}
\def\bua{\overline{u}_a}
\def\weak{\varphi}
\def\z{\mathcal{I}}
\def\NN{p}
\def\rc{R}
\def\pp{\mathscr{P}_{\gamma}}
\def\rr{\mathscr{R}}
\def\sr{\mathscr{R}_{\gamma}}
\def\sss{\ss^{I\!S}}
\def\ssa{\ss^{B\!L}_1}
\def\ssb{\ss^{B\!L}_2}
\def\ssc{\ss^{B\!L}_3}
\def\ssd{\ss^{I\!L}}
\begin{document}

\title{Diffusive Limit of the Unsteady Neutron Transport Equation in Bounded Domains}

\author[Z. Ouyang]{Zhimeng Ouyang}
\address[Z. Ouyang]{
   \newline\indent Department of Mathematics, University of Chicago}
\email{ouyangzm9386@uchicago.edu}
\thanks{Z. Ouyang is supported by NSF Grant DMS-2202824.}

\date{}

\subjclass[2020]{Primary 35Q49, 82D75; Secondary 35Q62, 35Q20}

\keywords{non-convex domain, boundary layer, initial layer, diffusive limit}

\maketitle

\begin{abstract}
The justification of hydrodynamic limits in non-convex domains has long been an open problem due to the singularity at the grazing set. In this paper, we investigate the unsteady neutron transport equation in a general bounded domain with the in-flow, diffuse-reflection, or specular-reflection boundary condition. 
Using a novel kernel estimate, we demonstrate the optimal $L^2$ diffusive limit in the presence of both initial and boundary layers.
Previously, this result was only proved for convex domains when the time variable is involved.
Our approach is highly robust, making it applicable to all basic types of physical boundary conditions.
\end{abstract}

\pagestyle{myheadings} \thispagestyle{plain} \markboth{Z. OUYANG}{DIFFUSIVE LIMIT OF UNSTEADY NEUTRON TRANSPORT EQUATION IN BOUNDED DOMAINS}

\smallskip
\setcounter{tocdepth}{1}
\tableofcontents

\section{Introduction}\label{Sec:intro}

The neutron transport equation is a fundamental model for studying particle dynamics in confined spaces, such as nuclear reactors and medical imaging devices (e.g., CT, MRI). Its investigation dates back to the dawn of the atomic age and has motivated significant developments in applied mathematics, such as the discontinuous Galerkin (DG) method \cite{Cockburn.Shu1989, Cockburn.Hou.Shu1990, Cockburn.Karniadakis.Shu2000} and the inverse transport problem \cite{Bal2009, Bal.Ren2012}. 

We consider the unsteady neutron transport equation in a
three-dimensional smooth bounded domain $\Omega$ without assuming any convexity. 
Denote the time variable $t\in\rp$, the space variable $\vx=(x_1,x_2,x_3)\in\Omega$, and the velocity variable
$\vw=(w_1,w_2,w_3)\in\s^2$. 
Let the neutron density $u^{\e}(t,\vx,\vw)$ satisfy the in-flow boundary problem
\begin{align}\label{transport}
\left\{
\begin{array}{l}\displaystyle
\e\dt u^{\e}+\vw\cdot\nabla_x u^{\e}+\e^{-1}\big(u^{\e}-\overline{u}^{\e}\big)=0\ \ \text{ in}\ \ \rp\times\Omega\times\s^2,\\\rule{0ex}{1.5em}
u^{\e}(0,\vx,\vw)=u_o(\vx,\vw)\ \ \text{ in}\ \  \Omega\times\s^2,\\\rule{0ex}{1.5em}
u^{\e}(t,\vx_0,\vw)=g(t,\vx_0,\vw)\ \ \text{ for}\
\ \vx_0\in\p\Omega\ \ \text{and}\ \ \vw\cdot\vn(\vx_0)<0.
\end{array}
\right.
\end{align}
Here, $\overline{u}^{\e}$ is the velocity average of $u^{\e}$, i.e.,
\begin{align}\label{average}
\overline{u}^{\e}(t,\vx)=\frac{1}{4\pi}\int_{\s^2}u^{\e}(t,\vx,\vw)\,\ud{\vw}.
\end{align}
Throughout the paper, we will use $\overline{f}(t,x)$ to denote the velocity average of a function $f(t,x,w)$.
$u_o$ and $g$ are the given initial data and boundary data, respectively,
and $\vn(\vx_0)$ is the outward unit normal vector at boundary point $\vx_0\in\p\Omega$.  
Our goal is to study the asymptotic behavior of the solution $u^{\e}$ as the Knudsen number $\e\rt0^+$.

When the incoming particles are distributed like a probability average of the outgoing particles (possibly plus a small perturbation), then the boundary condition in \eqref{transport} is replaced by 
\begin{align}\label{diffuse-BC}
    u^{\e}(t,\vx_0,\vw)=\pp[u^{\e}](t,\vx_0)+\e h(t,\vx_0,\vw)\ \ \text{ for}\
\ \vx_0\in\p\Omega\ \ \text{and}\ \ \vw\cdot\vn(\vx_0)<0,
\end{align}
where 
\begin{align}
    \pp[u^{\e}](t,\vx_0):=c_{\gamma}\int_{\vw\cdot\vn>0}u^{\e}(t,\vx_0,\vw)(\vw\cdot\vn)\,\ud\vw
\end{align}
normalized with the constant $c_{\gamma}=\frac{1}{\pi}$ satisfying $c_{\gamma}\int_{\vw\cdot\vn>0}(\vw\cdot\vn)\,\ud\vw=1$.  
This is called the diffuse-reflection boundary condition.

In the specular-reflection boundary problem, where the particles bounce back specularly (with a small perturbation), the boundary condition in \eqref{transport} is replaced by 
\begin{align}\label{specular-BC}
    u^{\e}(t,\vx_0,\vw)=u^{\e}(t,\vx_0,\rr\vw)+\e h(t,\vx_0,\vw)\ \ \text{ for}\
\ \vx_0\in\p\Omega\ \ \text{and}\ \ \vw\cdot\vn(\vx_0)<0
\end{align}
with $\rr\vw:=\vw-2(\vw\cdot\vn)\vn$.

For both reflection type boundary conditions, we additionally assume the compatibility condition 
\begin{align}\label{g-compatibility=}
    \int_{\vw\cdot\vn<0}h(t,\vx_0,\vw)(\vw\cdot\vn)\,\ud\vw=0\ \ \text{ for all}\ \  \vx_0\in\p\Omega\ \ \text{and}\ \ t\in\rp
\end{align}
on the given perturbative boundary data $h$, 
so that the solution enjoys the null flux condition at the boundary: 
\begin{align}\label{ue-zero-flux}
    \int_{\s^2}u^{\e}(t,\vx_0,\vw)(\vw\cdot\vn)\,\ud\vw = 0 
    \ \ \text{ for all}\ \  \vx_0\in\p\Omega\ \ \text{and}\ \ t\in\rp.
\end{align}

\subsection{Notation}

Based on the flow direction, we can divide the phase boundary $\gamma:=\p\Omega\times\s^2$ into the incoming boundary $\gamma_-:=\big\{(\vx_0,\vw)\in\gamma:\vw\cdot\vn(\vx_0)<0\big\}$, the outgoing boundary $\gamma_+:=\big\{(\vx_0,\vw)\in\gamma:\vw\cdot\vn(\vx_0)>0\big\}$, and the grazing set $\gamma_0:=\big\{(\vx_0,\vw)\in\gamma:\vw\cdot\vn(\vx_0)=0\big\}$. 
In particular, the boundary conditions of \eqref{transport}, \eqref{diffuse-BC} and \eqref{specular-BC} are only prescribed on $\gamma_{-}$. 
Denote $\Gamma:=[0,T]\times\gamma$ and we may also split $\Gamma=\Gamma_-\cup\Gamma_+\cup\Gamma_0$ in a parallel manner.

Let $\brb{\ \cdot\ ,\ \cdot\ }{\sigma}$ denote the inner product with respect to variable(s) $\sigma$ which can be $t,x,w$, their combinations, or the domain specified, and let $\dbr{\ \cdot\ ,\ \cdot\ }$ be the inner product over $[0,t]\times\Omega\times\s^2$ for $t\in[0,T]$, i.e.,
\begin{align}
    \dbr{f,g} := \int_0^t\!\iint_{\Omega\times\s^2} fg \,\ud\vw\ud\vx\ud t' .
\end{align}
Also, let $\br{\ \cdot\ ,\ \cdot\ }_{\gamma_{\pm}}$ denote the inner product on $\gamma_{\pm}$ with measure $\ud\gamma:=\abs{w\cdot n}\ud w\ud S_x$, where $\ud S_x$ is the differential of the boundary surface. 
When the time variable is involved, we use $\br{\ \cdot\ ,\ \cdot\ }_{\Gamma_{\pm}}$ 
to denote
\begin{align}
    \br{f,g}_{\Gamma_{\pm}} := \int_0^t\!\iint_{\p\Omega\times\s^2} fg \abs{w\cdot n}\ud w\ud S_x\ud t' .
\end{align}

Define the bulk and boundary $L^2$ norms
\begin{align}
    \tnm{f}:=\left(\iint_{\Omega\times\s^2}\babs{f(x,w)}^2\ud w\ud x\right)^{\frac{1}{2}},\qquad \tnms{f}{\gamma_{\pm}}:=\bigg(\int_{\gamma_{\pm}}\babs{f(x,w)}^2\ud\gamma\bigg)^{\frac{1}{2}},
\end{align}
and the $L^{\infty}$ norms
\begin{align}
    \lnm{f}:=\esssup_{(x,w)\in\Omega\times\s^2}\babs{f(x,w)},\qquad
    \lnms{f}{\gamma_{\pm}}:=\esssup_{(x,w)\in\gamma_{\pm}}\babs{f(x,w)}.
\end{align}
When the $t$ variable is involved, we use $\tnnm{f}$, $\tnnms{f}{\Gamma_\pm}$, $\lnnm{f}$ and $\lnnms{f}{\Gamma_\pm}$ to denote the corresponding norms.

For $k,m,l\in\mathbb{N}$, $1\leq p,q,r\leq\infty$, let $W^{k,p}$ be the Sobolev space, and let $\nnm{\,\cdot\,}_{W^{l,r}_tW^{k,p}_xW^{m,q}_w}$ denote the usual Sobolev norm for $t\in\rp$, $x\in\Omega$, and $w\in\s^2$.
If we use mixed norms on the boundary, then denote by $\abs{\,\cdot\,}_{W^{k,p}_xW^{m,q}_w(\gamma_\pm)}$ the mixed Sobolev norm for $(x,w)\in\gamma_\pm$ with measure $\ud\gamma=\abs{w\cdot n}\ud w\ud S_x$. The same convention also applies to $\nm{\,\cdot\,}_{W^{l,r}_tW^{k,p}_xW^{m,q}_w(\Gamma_\pm)}$ when $t$ is involved.
Sometimes we will omit the variables in the subscript when there is no possibility of confusion.

We will abuse notation for the functions under change of variables: 
for example, if $\mathcal{T}:(t,x,w)\mapsto(\tau,y,v)$, and let $f(t,x,w)=f\circ\mathcal{T}^{-1}(\tau,y,v)=:\widetilde{f}(\tau,y,v)$,
then we just write $f(\tau,y,v)$ instead of $\widetilde{f}(\tau,y,v)$ for notational simplicity.
However, unless otherwise specified, the integrals in the $L^p$-norms and the inner products are still with respect to the original variables $t,x,w$.

Throughout this paper, $C>0$ denotes a constant that only depends on
the spatial domain $\Omega$ or temporal domain $[0,T]$ for some given $T>0$, but does not depend on the data or $\e$. 
It is referred as universal constant and can change from one inequality to another. We write $a\ls b$ to denote $a\leq Cb$ and $a\gs b$ to denote $a\geq Cb$. Also, we write $a\simeq b$ if $a\ls b$ and $a\gs b$. 
We will use $0<\oo\ll 1$ to denote a sufficiently small constant independent of the data or $\e$.

\subsection{Main Results}

Our main theorems establish the diffusive limit of the unsteady neutron transport equation in a bounded domain, under three different types of boundary conditions: in-flow, diffuse-reflection, and specular-reflection.
As the assumption on data and the limit problem differ among the three cases, it is better to state our results separately:

\begin{theorem}[In-flow case]\label{main theorem}
Given $u_o$ and $g$ satisfying
\begin{align}\label{assumption}
    \nm{u_o}_{W^{3,\infty}_xL^{\infty}_w}+\nm{g}_{W^{2,\infty}_tW^{3,\infty}_xW^{1,\infty}_{w}(\Gamma_-)}\ls 1
\end{align}
and the compatibility condition 
\begin{align}\label{compatibility}
    u_o(\vx_0,\vw)=g(0,\vx_0,\vw)=C(x_0)\ \ \text{ for every}\ \ \vx_0\in\p\Omega,
\end{align}
there exists a unique solution $u^{\e}(t,\vx,\vw)\in L^{\infty}\big([0,T]\times\Omega\times\s^2\big)$ to \eqref{transport} for any $T>0$. Moreover, the solution obeys the estimate
\begin{align}\label{main}
\bnnm{u^{\e}-\u_0}_{L^2([0,T]\times\Omega\times\s^2)}\ls\e^{\frac{1}{2}},
\end{align}
where $\u_0(t,x,w)=\overline\u_0(t,\vx)$ satisfies the heat equation with Dirichlet boundary condition:
\begin{align}
\left\{
\begin{array}{l}
\dt\overline\u_0-\Delta_x\overline\u_0=0\ \ \text{ in}\
\ [0,T]\times\Omega,\\\rule{0ex}{1.5em}
\overline\u_0(0,\vx)=\overline{u}_o(\vx)\ \ \text{ in}\
\ \Omega,\\\rule{0ex}{1.5em}
\overline\u_0(t,\vx_0)=\bl_{0,\infty}(t;\iota_1,\iota_2)\ \ \text{ on}\ \
[0,T]\times\p\Omega,
\end{array}
\right.
\end{align}
in which $\bl_{0,\infty}$ is determined by solving the boundary layer equation (Milne problem) for $\bl_0$:
\begin{align}
\left\{
\begin{array}{l}
\sin\phi\,\dfrac{\p \bl_0 }{\p\eta}+\bl_0 -\bbl_0 =0,\\\rule{0ex}{1.2em}
\bl_0 (t;0,\iota_1,\iota_2;\phi,\psi)=g(t;\iota_1,\iota_2;\phi,\psi)\ \ \text{ for}\ \
\sin\phi>0,\\\rule{0ex}{1.5em}
\ds\lim_{\eta\rt\infty}\bl_0 (t;\eta,\iota_1,\iota_2;\phi,\psi)=\bl_{0,\infty}(t;\iota_1,\iota_2).
\end{array}
\right.
\end{align}
Here, the change of variables $(x;w)\mapsto(\eta,\iota_1,\iota_2;\phi,\psi)$ near the boundary are defined in Section~\ref{sec:boundary-layer}.
\end{theorem}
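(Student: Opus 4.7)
The plan is to prove \eqref{main} by constructing a matched asymptotic expansion $\ua$ that captures the interior diffusive behavior, the boundary layer near $\p\Omega$, and the initial layer near $t = 0$, and then estimating the remainder $u^\e - \ua$ in $L^2$. The genuinely new ingredient, needed because $\Omega$ is non-convex and the problem is time-dependent, is a kernel estimate that controls the hydrodynamic part of the remainder uniformly in $\e$.

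\textbf{Step 1: Matched asymptotic expansion.} Write
\begin{align*}
\ua(t,\vx,\vw) := \sum_{k=0}^{N}\e^k\,\u_k(t,\vx,\vw) + \ch_B(\vx)\sum_{k=0}^{N}\e^k\,\uu_k\!\left(t,\tfrac{d(\vx)}{\e},\iota_1,\iota_2,\phi,\psi\right) + \ch_I(t)\sum_{k=0}^{N}\e^k\,\uuu_k\!\left(\tfrac{t}{\e^2},\vx,\vw\right),
\end{align*}
for $N$ sufficiently large, where $d(\vx)$ is the distance to $\p\Omega$ and $\ch_B,\ch_I$ are smooth cut-offs localizing the layers near $\p\Omega$ and $t = 0$. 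Matching powers of $\e$: the leading interior term $\u_0 = \bu_0(t,\vx)$ must lie in the kernel of $f\mapsto f - \bar f$; Fredholm solvability at the next order yields the heat equation for $\bu_0$; $\uu_0$ is the Milne profile stated in the theorem, with its limit at $\eta = \infty$ supplying the Dirichlet data $\bl_{0,\infty}$ for $\bu_0$; and $\uuu_0$ solves a time-scaled relaxation ODE that reconciles $\u_0(0,\cdot)$ with $u_o$ and imposes $\bu_0(0,\cdot) = \bar u_o$. Higher-order correctors are constructed analogously using the regularity in \eqref{assumption} and the matching/compatibility in \eqref{compatibility}.

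\textbf{Step 2: Remainder equation and energy identity.} Set $R^\e := \e^{-M}(u^\e - \ua)$ for a suitable $M$. By construction,
\begin{align*}
\e \dt R^\e + \vw\cdot\nx R^\e + \e^{-1}(R^\e - \bar R^\e) = \ss^\e,\qquad R^\e(0,\cdot) = R^\e_o,\qquad R^\e\big|_{\Gamma_-} = R^\e_g,
\end{align*}
with $\ss^\e, R^\e_o, R^\e_g$ of order one in appropriate $L^2$-type norms. Testing against $R^\e$ on $[0,t]\times\Omega\times\s^2$ gives the standard identity
\begin{align*}
\frac{\e}{2}\bnm{R^\e(t)}_{L^2}^2 + \frac{1}{2}\tnnms{R^\e}{\Gamma_+}^2 + \frac{1}{\e}\dbr{R^\e - \bar R^\e,\,R^\e - \bar R^\e} = \frac{\e}{2}\bnm{R^\e_o}_{L^2}^2 + \frac{1}{2}\tnnms{R^\e_g}{\Gamma_-}^2 + \dbr{\ss^\e, R^\e}.
\end{align*}
This coerces only the microscopic part $R^\e - \bar R^\e$ with weight $\e^{-1}$; the hydrodynamic part $\bar R^\e$ requires a separate argument.

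\textbf{Step 3: Kernel estimate (main obstacle) and conclusion.} To control $\bar R^\e$, introduce an auxiliary function $\weak$ solving $-\dx\weak = \bar R^\e$ with homogeneous Dirichlet data and test the remainder equation against a transport-adapted combination built from $\weak$. Integration by parts converts the left-hand side into $\bnm{\bar R^\e}_{L^2}^2$ plus trace terms on $\Gamma_\pm$. In convex domains these traces are tractable by straightforward characteristic analysis, but in non-convex domains the characteristics can repeatedly approach the grazing set $\gamma_0$, producing apparently singular contributions. The kernel estimate represents $\bar R^\e$ via the integral kernel of the semigroup generated by $\vw\cdot\nx + \e^{-1}(I - \text{avg})$, quantifies how much mass this kernel places in an $\e^{\alpha}$-neighborhood of $\gamma_0$, and shows that the near-grazing contribution can be absorbed into the dissipation $\e^{-1}\dbr{R^\e - \bar R^\e,\,R^\e - \bar R^\e}$ at the cost of an arbitrarily small multiplicative constant, the rest being bounded by $L^2$ norms of the source and data. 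A Grönwall argument then yields $\bnm{R^\e(t)}_{L^2}\ls 1$ uniformly in $\e$, so $\bnm{u^\e - \ua}_{L^2}\ls \e^{M}$. Finally, because each layer corrector in $\ua - \u_0$ concentrates on a slab of width $\e$ (in space or time) with bounded amplitude, $\bnm{\ua - \u_0}_{L^2([0,T]\times\Omega\times\s^2)}\ls \e^{1/2}$, and the triangle inequality gives \eqref{main}. The decisive obstacle is the kernel estimate of Step 3: the stationary non-convex argument in the literature must be upgraded to the time-dependent setting with an active initial layer, which is precisely the novelty highlighted in the abstract.
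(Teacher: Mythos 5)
There is a genuine gap in two places, and both cut against the feasibility of your plan as written.

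First, in Step 1 you propose expanding the boundary layer to order $N$ with $N$ ``sufficiently large.'' This is precisely the step that the paper is designed to \emph{avoid}: in non-convex domains, the Milne problem (with or without geometric correction) does not deliver $W^{1,\infty}$ regularity of the boundary layer, so one cannot even define, let alone estimate, the source terms that a higher-order boundary layer $\uu_1,\uu_2,\dots$ would generate. The paper keeps only the leading-order profile $\uu_0$, and even that must be tamed by a \emph{velocity} cutoff $\widetilde\chi(\e^{-1}\phi)$ that kills the grazing-set singularity, in addition to the spatial cutoff $\chi(\e^{1/2}\eta)$. Your plan, which relies on making $\ua$ accurate to arbitrarily high order, therefore cannot be carried out in the non-convex setting and would need to be replaced by: stop the boundary-layer hierarchy at $\uu_0$, insert cutoffs, and accept a source term $\mathcal{S}$ that is only $O(\e^{1/2})$ (not $O(\e^N)$) in the relevant norms.

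Second, your Step 3 describes the ``kernel estimate'' as a representation of $\bar R^\e$ via the integral kernel of the transport semigroup, with a quantitative estimate of how much mass this kernel carries near $\gamma_0$. That is not how the paper controls $\bre$, and it is unclear this route would work here: characteristics-based / Duhamel-kernel arguments are exactly what become singular at the grazing set in non-convex domains, which is the obstruction motivating the paper. In the paper ``kernel'' refers to the hydrodynamic part $\bre$ (the kernel of $f\mapsto f-\bar f$), not to a semigroup kernel, and the estimate is purely variational: one solves $-\Delta_x\test=\bre$ with Dirichlet data and tests the remainder equation against the \emph{three} functions $\test$, $w\cdot\nx\test$, and $\e\dt\test$. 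The linear combination of the first two cancels the dangerous term $\e^{-1}\dbbr{\re-\bre,\,w\cdot\nx\test}$ outright (so the factor $\e^{-1}\|\re-\bre\|$ never enters), while the third controls $\e\,\nx\dt\test$, which is needed after the time integration by parts; Hardy's inequality and the $L^2_x L^1_w$ boundary-layer smallness then produce the crucial extra $\e^{1/2}$ on the source side. Without this specific cancellation mechanism, the naive kernel bound only gives $\|\bre\|\ls\e^{-1}\|\re-\bre\|+\cdots$, which is not good enough to close at order $\e^{1/2}$ with only $\uu_0$ available. Finally, the closing step is an absorption between the energy and kernel inequalities (coupled linear system in the two unknown norms), not Gr\"onwall.

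Your high-level triad (expansion, energy identity, auxiliary Poisson test function for the hydrodynamic part) is right in spirit, but the two points above -- no higher-order boundary layers allowed, and the specific three-test-function cancellation -- are the new content of the theorem and are missing from the proposal as it stands.
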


\begin{remark} \label{Rmk:time-decay}
    If we make the stronger assumption on the boundary data that for some $\alpha>0$
    \begin{align}\label{assumption.}
    \nm{\ue^{\alpha t}g}_{W^{2,\infty}_tW^{3,\infty}_xW^{1,\infty}_{w}(\Gamma_-)}\ls 1,
    \end{align} 
    then based on Theorem~\ref{thm:remainder-est}, we have the approximation estimate with time decay for some $0<\beta<\alpha$
    \begin{align}\label{main=}
    \nnm{\ue^{\beta t}\big(u^{\e}-\u_0\big)}_{L^2(\rp\times\Omega\times\s^2)}\ls\e^{\frac{1}{2}}.
    \end{align}
\end{remark}

\begin{remark}
    The estimate \eqref{main} achieves the optimal convergence rate in $L^2_{t,x,w}$. In \cite{AA005,AA012,AA016} for 2D/3D convex domains, it is justified that 
    \begin{align}
    \btnnm{u^{\e}-\tu_0-\tuu_0-\uuu_0}\ls\e^{\frac{5}{6}-},
    \end{align}
    where $\tuu_0$ is the boundary layer with geometric correction, $\uuu_0$ is the initial layer, and $\tu_0$ is the corresponding interior solution. \cite[Theorem 2.1]{Li.Lu.Sun2017} reveals that the two interior solutions differ by
    \begin{align}
        \btnnm{\tu_0-\u_0}\ls\e^{\frac{2}{3}}.
    \end{align}
    Due to the rescaling in the normal spatial variable $\eta=\e^{-1}\mn$, for general non-constant in-flow boundary data, the boundary layer $\tuu_0\neq0$ satisfies
    \begin{align}
        \btnnm{\tuu_0}\simeq\e^{\frac{1}{2}}.
    \end{align}
    Also, 
    due to the rescaling in the temporal variable $\tau=\e^{-2}t$, we have
    \begin{align}
        \btnnm{\uuu_0}\simeq\e.
    \end{align}
    Hence, we conclude that
    \begin{align}
        \btnnm{u^{\e}-\u_0}\simeq\e^{\frac{1}{2}}.
    \end{align}
    This suggests the optimality of our diffusive approximation \eqref{main}.
\end{remark}

In the case of the diffuse and specular reflection boundary conditions, we have the following analogues:

\begin{theorem}[Diffuse-reflection case]\label{main theorem-2}
If we consider \eqref{transport} with the boundary condition replaced by \eqref{diffuse-BC}, 
then under the assumption 
\begin{align}\label{assumption-2}
    \nm{u_o}_{W^{3,\infty}_xL^{\infty}_w}+\nm{h}_{W^{1,\infty}_tW^{2,\infty}_xL^{\infty}_w(\Gamma_-)}\ls 1
\end{align}
with the compatibility conditions \eqref{g-compatibility=} and 
\begin{align}\label{compatibility-2}
    u_o(\vx_0,\vw)=C(x_0), \quad h(0,\vx_0,\vw)=0\ \ \text{ for every}\ \ \vx_0\in\p\Omega,
\end{align}
there exists a unique solution $u^{\e}(t,\vx,\vw)\in L^{\infty}\big([0,T]\times\Omega\times\s^2\big)$ for any $T>0$. Moreover, the solution obeys the estimate
\begin{align}\label{main-2}
\bnnm{u^{\e}-\u_0}_{L^2([0,T]\times\Omega\times\s^2)}\ls\e^{\frac{1}{2}},
\end{align}
where $\u_0(t,x,w)=\overline\u_0(t,\vx)$ satisfies the heat equation with Neumann boundary condition:
\begin{align}
\left\{
\begin{array}{l}
\dt\overline\u_0-\Delta_x\overline\u_0=0\ \ \text{ in}\
\ [0,T]\times\Omega,\\\rule{0ex}{1.5em}
\overline\u_0(0,\vx)=\overline{u}_o(\vx)\ \ \text{ in}\
\ \Omega,\\\rule{0ex}{1.5em}
\tfrac{\p\overline\u_0}{\p\vn}(t,\vx_0)=0\ \ \text{ on}\ \
[0,T]\times\p\Omega.
\end{array}
\right.
\end{align}
\end{theorem}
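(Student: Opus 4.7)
The plan is to parallel the in-flow analysis of Theorem~\ref{main theorem}, with two structural modifications tailored to the reflection boundary: the leading boundary-layer Milne problem carries a reflection-type condition built from $\pp$, and the perturbative datum $h$ is pushed to subleading order. First, I would fix the ansatz
\begin{align*}
u^{\e} \;\approx\; \u_0 + \uu_0 + \uuu_0 + \e\bigl(\u_1 + \uu_1 + \uuu_1\bigr) + \re^{\e},
\end{align*}
with $\u_k$ interior terms obeying the Hilbert chain, $\uu_k$ boundary layers in the rescaled normal variable $\eta=\mn/\e$, $\uuu_k$ initial layers in $\tau=t/\e^2$, and $\re^{\e}$ the remainder.

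Next, I would identify each term. Because $h=O(\e)$, the leading-order boundary-layer Milne problem reads
\begin{align*}
\sin\phi\,\p_\eta\bl_0 + \bl_0 - \bbl_0 = 0,\qquad \bl_0(t;0,\iota_1,\iota_2;\phi,\psi)=\pp[\bl_0](t;\iota_1,\iota_2)\ \ \text{for}\ \sin\phi>0,
\end{align*}
together with the matching $\lim_{\eta\to\infty}\bl_0=\bl_{0,\infty}$. Since constants lie in the kernel of $I-\pp$, the unique bounded solution is $\bl_0\equiv\bl_{0,\infty}=\overline\u_0|_{\p\Omega}$, so the leading boundary layer is trivial. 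The Neumann condition $\p_\vn\overline\u_0=0$ then arises as the Fredholm solvability condition for the $O(\e)$ boundary layer $\bl_1$, or equivalently by substituting the Hilbert chain into the null-flux identity \eqref{ue-zero-flux}: the $O(\e)$ balance gives $\int_{\s^2}(\vw\cdot\vn)(\vw\cdot\nabla_x\overline\u_0)\,\ud\vw = \tfrac{4\pi}{3}\p_\vn\overline\u_0 = 0$, after using \eqref{g-compatibility=} to eliminate the $h$-contribution. The interior heat equation in the bulk then follows from the standard Hilbert procedure, matching the in-flow analysis.

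Third, the initial layer $\uuu_0$ is constructed exactly as in the in-flow case: it solves a linear transport equation in the rescaled time $\tau=t/\e^2$ with initial datum $u_o-\overline\u_0(0,\vx)$, decays exponentially to zero as $\tau\to\infty$, and \eqref{compatibility-2} ensures compatibility with the reflection boundary at $\tau=0$. The remainder $\re^{\e}$ then satisfies a transport equation with bulk source of size $O(\e)$ and a diffuse boundary condition with inhomogeneity $O(\e)$. Applying the $L^2$ energy identity, together with the nonnegativity of the reflection boundary form $\br{(I-\pp)\re^{\e},\re^{\e}}_{\gamma_+}\ge 0$ and the kernel estimate of Theorem~\ref{thm:remainder-est}, which controls $\overline{\re^{\e}}$ uniformly in $\e$ without convexity hypotheses, yields $\btnnm{\re^{\e}}\ls\e^{\frac{1}{2}}$ and therefore \eqref{main-2}.

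The main obstacle will be the remainder estimate under the diffuse boundary. Unlike the in-flow case where the incoming trace is prescribed data and the boundary flux is sign-definite, here incoming and outgoing traces are coupled through $\pp$, and closing the energy estimate requires a quantitative spectral gap for $I-\pp$ on $L^2(\gamma_+;\ud\gamma)$ modulo constants, combined with the induced null-flux property of $\re^{\e}$ to kill the zero mode. Making this boundary control compatible with the kernel-estimate framework, which pays an $\e^{-1/2}$ cost to control the velocity-averaged remainder in non-convex domains, is the delicate quantitative balance that must be maintained.
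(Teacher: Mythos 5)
Your formal derivation of the Neumann condition for $\overline\u_0$ and the observation that the leading boundary layer is trivial are both correct, and the overall ansatz is in the right spirit. But there are two substantive issues.

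First, the paper does not introduce an $O(\e)$ boundary layer $\uu_1$ in the diffuse case (see the remark contrasting diffuse and specular: the diffuse perturbation is absorbed directly into the remainder's boundary datum $\h$ and controlled through $(1-\pp)[\re]$ in the energy identity). Your $\uu_1$ is never constructed; if you intend it to satisfy a Milne problem with reflection boundary, you would need to prove its well-posedness and $W^{1,\infty}$-regularity in a non-convex geometry, which is exactly the obstacle the cutoff boundary layer was designed to avoid. Carrying it along is at best an unneeded complication.

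Second, and more importantly, you invoke ``the kernel estimate of Theorem~\ref{thm:remainder-est}'' as if it transfers directly, but that theorem's proof is tied to the in-flow case: its auxiliary function $\test$ solves a \emph{Dirichlet} problem $-\Delta_x\test=\bre$, $\test|_{\p\Omega}=0$, and the argument uses $\test|_{\p\Omega}=0$ both to kill boundary terms and to run Hardy's inequality in the normal direction. Under the diffuse boundary the correct device is a \emph{Neumann} auxiliary problem $-\Delta_x\test=\bre$, $\p_\vn\test|_{\p\Omega}=0$, whose solvability needs the zero-mean property $\int_\Omega\bre\,\ud x=0$ — a nontrivial structural fact coming from $\int_{\Omega\times\s^2}\ss=0$, $\int_{\Omega\times\s^2}\z=0$ and the null-flux compatibility \eqref{g-compatibility=}, which your proposal never verifies. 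Once $\test$ has homogeneous Neumann data, the boundary contribution $\br{\pp[\re],w\cdot\nx\test}_{\Gamma_+}-\br{\pp[\re],w\cdot\nx\test}_{\Gamma_-}$ vanishes identically because $\int_{\s^2}(w\cdot\nx\test)(w\cdot n)\,\ud w=\tfrac{4\pi}{3}\p_\vn\test=0$, leaving only $(1-\pp)[\re]$ and $\h$; the ``spectral gap for $I-\pp$'' you flag as a delicate obstacle is therefore not needed — the $L^2(\gamma_+)$ orthogonality $\pp\perp(1-\pp)$ together with this cancellation is all the boundary control required. Without the Neumann reformulation of $\test$ and the zero-mean verification, the kernel estimate — which is the crux of the whole diffusive-limit argument — does not close.
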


\begin{theorem}[Specular-reflection case]\label{main theorem-3}
If we consider \eqref{transport} with the boundary condition replaced by \eqref{specular-BC},
then under the assumption 
\begin{align}\label{assumption-3}
    \nm{u_o}_{W^{3,\infty}_xL^{\infty}_w}+\nm{h}_{W^{1,\infty}_tW^{2,\infty}_xW^{1,\infty}_w(\Gamma_-)}\ls 1
\end{align}
with the compatibility conditions \eqref{g-compatibility=} and $h|_{\Gamma_0}=0$, as well as
\begin{align}\label{compatibility-3}
    u_o(\vx_0,\vw)=C(x_0), \quad \nx u_o(\vx_0,\vw)=0, \quad h(0,\vx_0,\vw)=0\ \ \text{ for every}\ \ \vx_0\in\p\Omega,
\end{align}
there exists a unique solution $u^{\e}(t,\vx,\vw)\in L^{\infty}\big([0,T]\times\Omega\times\s^2\big)$ for any $T>0$. Moreover, the solution obeys the estimate
\begin{align}\label{main-3}
\bnnm{u^{\e}-\u_0}_{L^2([0,T]\times\Omega\times\s^2)}\ls\e^{\frac{1}{2}},
\end{align}
where $\u_0(t,x,w)=\overline\u_0(t,\vx)$ satisfies the heat equation with Neumann boundary condition:
\begin{align}
\left\{
\begin{array}{l}
\dt\overline\u_0-\Delta_x\overline\u_0=0\ \ \text{ in}\
\ [0,T]\times\Omega,\\\rule{0ex}{1.5em}
\overline\u_0(0,\vx)=\overline{u}_o(\vx)\ \ \text{ in}\
\ \Omega,\\\rule{0ex}{1.5em}
\tfrac{\p\overline\u_0}{\p\vn}(t,\vx_0)=0\ \ \text{ on}\ \
[0,T]\times\p\Omega.
\end{array}
\right.
\end{align}
\end{theorem}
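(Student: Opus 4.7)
The plan is to mirror the proof of Theorem~\ref{main theorem}: construct a multi-scale asymptotic expansion
\begin{align*}
\ua^{\e}=\sum_{k=0}^{N}\e^k\big(\u_k+\uu_k+\uuu_k\big)
\end{align*}
consisting of an interior Hilbert part, a boundary layer (in the stretched normal coordinate $\eta=\mn/\e$), and an initial layer (in the stretched time $\tau=t/\e^2$), so that $\ua^{\e}$ satisfies \eqref{transport} together with the specular condition \eqref{specular-BC} up to an $O(\e^{N+1})$ error; then bound the remainder $R^{\e}:=u^{\e}-\ua^{\e}$ by combining an $L^2$ energy identity with the novel kernel estimate that upgrades hypocoercive control of $R^{\e}-\overline{R^{\e}}$ into control of $\overline{R^{\e}}$.

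The only structural difference from the in-flow case occurs in the Milne problem for the leading boundary layer, which in local coordinates $(\eta,\iota_1,\iota_2;\phi,\psi)$ becomes
\begin{align*}
\left\{
\begin{array}{l}
\sin\phi\,\dfrac{\p \bl_0}{\p\eta}+\bl_0-\bbl_0=0,\\\rule{0ex}{1.2em}
\bl_0(t;0,\iota_1,\iota_2;\phi,\psi)=\bl_0(t;0,\iota_1,\iota_2;-\phi,\psi)\ \ \text{ for}\ \ \sin\phi>0,\\\rule{0ex}{1.5em}
\ds\lim_{\eta\rt\infty}\bl_0(t;\eta,\iota_1,\iota_2;\phi,\psi)=\bl_{0,\infty}(t;\iota_1,\iota_2).
\end{array}
\right.
\end{align*}
Since the homogeneous specular condition is invariant under $\phi\mapsto-\phi$, any constant profile solves the problem; the Fredholm-type solvability at the next order, together with the null-flux hypothesis \eqref{g-compatibility=}, then translates into the homogeneous Neumann condition $\p_\vn\overline\u_0=0$ on the interior limit. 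The initial layer $\uuu_0$ is built in the stretched time $\tau=t/\e^2$ and decays exponentially. The compatibility assumptions $u_o(\vx_0,\vw)=C(x_0)$, $\nx u_o(\vx_0,\vw)=0$, and $h|_{\Gamma_0}=0$ of \eqref{compatibility-3} are used precisely to kill corner singularities at $(t,\vx)=(0,\p\Omega)$ and grazing singularities of $\bl_0$, so that each expansion term has the pointwise regularity required below.

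The remainder $R^{\e}$ solves the transport equation with source $\e^{N+1}Q^{\e}$, zero initial data, and boundary condition $R^{\e}(t,\vx_0,\vw)=R^{\e}(t,\vx_0,\rr\vw)+\e^{N+1}h_R$ on $\gamma_-$. Pairing against $R^{\e}$ on $[0,t]\times\Omega\times\s^2$ and using that $\vw\mapsto\rr\vw$ is an involution of $\gamma$ preserving $\ud\gamma$, one obtains $\br{R^{\e},R^{\e}}_{\Gamma_+}-\br{R^{\e},R^{\e}}_{\Gamma_-}=O(\e^{N+1})$---a structural cancellation unavailable in the in-flow case. This yields the hypocoercive bound $\tnnm{R^{\e}-\overline{R^{\e}}}\ls\e^{N+\frac{1}{2}}$, but does not directly control $\overline{R^{\e}}$.

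The main obstacle is closing the estimate on $\overline{R^{\e}}$, and this is where the novel kernel estimate of the paper must do the heavy lifting: I would represent $\overline{R^{\e}}$ via the iterated Duhamel formula along characteristics bouncing off $\p\Omega$, and bound each leg through a change of variables in boundary-adapted coordinates that gains an additional $\e^{1/2}$ without any convexity assumption. The specular boundary contribution is absorbed into the iteration because $\rr$ preserves both $\ud\gamma$ and the grazing set $\gamma_0$. Combined with the energy bound, this gives $\tnnm{R^{\e}}\ls\e^{N-\frac{1}{2}}$; taking $N\geq 2$ and noting that $\uu_0$ (supported in a normal strip of width $O(\e)$) and $\uuu_0$ (concentrated on a time interval of length $O(\e^2)$) together contribute at most $O(\e^{1/2})$ to $\tnnm{\ua^{\e}-\u_0}$ delivers \eqref{main-3}.
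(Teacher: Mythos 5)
Your proposal misses the key construction that makes the specular case work, and it misdescribes the paper's kernel estimate, so there are two genuine gaps.

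First, you leave a perturbation in the remainder's boundary condition, writing $R^{\e}=R^{\e}\circ\rr+\e^{N+1}h_R$ on $\gamma_-$, and then claim the boundary difference $\br{R^{\e},R^{\e}}_{\Gamma_+}-\br{R^{\e},R^{\e}}_{\Gamma_-}$ is $O(\e^{N+1})$. That is not warranted: expanding the square gives a cross term $-2\e^{N+1}\bbr{R^{\e}\circ\rr,h_R}_{\Gamma_-}$, which is bounded only by $\e^{N+1}\nnm{R^{\e}}_{L^2(\Gamma_+)}$, and in the specular case the energy identity does not separately control $\nnm{R^{\e}}_{L^2(\Gamma_+)}$ (it only sees the \emph{difference} between the outgoing and incoming integrals). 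Without a trace bound on $R^{\e}|_{\Gamma_+}$ this term cannot be absorbed, and the same obstruction reappears in the kernel estimate when the boundary term $\bbr{\re,w\cdot\nx\test}_{\Gamma_+}-\bbr{\re,w\cdot\nx\test}_{\Gamma_-}$ is treated. The paper gets around this precisely by introducing the artificial next-order ``boundary layer'' $\uu_1=\chi(\eta)\,\mathds{1}_{\{\sin\phi>0\}}h$, which is crafted so the remainder satisfies the \emph{exact} specular condition $\re=\re\circ\rr$ with zero perturbation; then the boundary terms vanish identically (cf.\ \eqref{temp 3} and \eqref{boundary-term}), no trace control of $\re|_{\Gamma_+}$ is needed, and the extra source $\ss^{B\!L}$ produced by $\uu_1$ is handled via integration by parts in $\mu$ using the null-flux compatibility \eqref{g-compatibility=}. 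Your mention of the specular Milne problem and the observation that constants solve it correctly identifies that $\uu_0=0$, but that leading-order triviality is not the issue; the first-order layer $\uu_1$ is indispensable.

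Second, you describe the ``novel kernel estimate'' as an iterated Duhamel formula along bouncing characteristics gaining $\e^{1/2}$ per leg. That is not what the paper does and would face exactly the grazing-set singularities the paper is designed to avoid. The kernel estimate (Lemma~\ref{lem:kernel} and its specular adaptation) is a duality/energy argument: one solves the auxiliary Neumann Poisson problem $-\Delta_x\test=\bre$, tests the remainder equation against $\test$, $w\cdot\nx\test$, and $\e\dt\test$, and combines the three identities so that the singular term $\e^{-1}\dbbr{\ire,w\cdot\nx\test}$ cancels while $-\dbbr{\bre,w\cdot\nx(w\cdot\nx\test)}\simeq\tnnm{\bre}^2$ survives. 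In the specular case this also requires replacing $\re$ by its mean-zero modification $\widetilde\re=\re-\frac{1}{|\Omega|}\int_\Omega\bre$ (since $\bre$ no longer has zero spatial average once $\uu_1$ is inserted) so that the Neumann problem for $\test$ is solvable; your proposal omits this normalization as well.
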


\begin{remark}
To compare the three boundary problems:
In all the three cases,
we achieve the $L^2$ diffusive limit with the same convergence rate
\begin{align}
\bnnm{u^{\e}-\u_0}_{L^2([0,T]\times\Omega\times\s^2)}\ls\e^{\frac{1}{2}},
\end{align}
and in a similar fashion as Remark~\ref{Rmk:time-decay}, the corresponding time decay if assuming decay on the boundary data.
For the in-flow case,
the interior solution $\u_0$ satisfies the heat equation with Dirichlet boundary determined by boundary layers.
For both diffuse and specular cases, $\u_0$ does not depend on boundary layers due to the vanishing leading-order boundary layer.
However, the proof of the specular case still requires introducing a next-order (artificial) ``boundary layer'', in order to enforce the perfect specular boundary condition for the remainder
(because we cannot control the boundary trace of remainder in the energy or kernel estimate); while the diffuse case allows small perturbation that can be controlled by $\big(1-\pp\big)[\re]$ in the energy, and thus no boundary layer in needed in the analysis.
\end{remark}

\subsection{Overview of the Method}

The neutron transport equation has been investigated from various perspectives since the 1960s. For the physical modeling and formal asymptotic expansion,  we refer to \cite{Larsen1974=, Larsen1974, Larsen1975, Larsen1977, Larsen.D'Arruda1976, Larsen.Habetler1973, Larsen.Keller1974, Larsen.Zweifel1974, Larsen.Zweifel1976}. For theoretical analysis, we refer to \cite{Bensoussan.Lions.Papanicolaou1979, Bardos.Santos.Sentis1984, Bardos.Golse.Perthame1987, Bardos.Golse.Perthame.Sentis1988}. For recent developments on the diffusive limit and the half-space problem, we refer to \cite{AA003, AA016, AA020, Li.Lu.Sun2015, Li.Lu.Sun2015==, Li.Lu.Sun2017} and the references therein.

In bounded domains, the mismatch of boundary conditions for the asymptotic expansion calls for a kinetic correction -- the so-called boundary layer (Knudsen layer). As far as we are aware of, there are two main approaches:

In flat domains (e.g. half space or $\mathbb{T}^2\times[0,1]$), the boundary layer $\uu_0(t;\eta,\iota_1,\iota_2;\phi,\psi)$ relies on the Milne problem (cf. \cite{Bensoussan.Lions.Papanicolaou1979, Bardos.Santos.Sentis1984})
\begin{align}\label{intro 01}
    \sin\phi\,\frac{\p \uu_0}{\p\eta}+\uu_0-\overline{\uu_0}=0.
\end{align}
This formulation is consistent with the intuitive derivation from the transport operator under change of coordinates and rescaling (see \eqref{expand 6}).

Unfortunately, in curved domains, the non-vanishing curvature and geometric effects are non-negligible due to the grazing set singularity. A surprising counter-example is constructed in \cite{AA003} so that 
\begin{align}
    \lim_{\e\rt0}\lnnm{u^{\e}-{\u_0}-{\uu_0}-{\uuu_0}}\neq0.
\end{align}

As \cite{AA003,AA016} pointed out, in curved convex domains, the boundary layer $\tuu_0(t;\eta,\iota_1,\iota_2;\phi,\psi)$ relies on the so-called $\e$-Milne problem with geometric correction: 
\begin{align}\label{intro 02}
    \sin\phi\,\frac{\p\tuu_0}{\p\eta}-\frac{\e}{1-\e\eta}\cos\phi\,\frac{\p\tuu_0}{\p\phi}+\tuu_0-\overline{\tuu_0}=0,
\end{align}
where the extra correction term $-\frac{\e}{1-\e\eta}\cos\phi\frac{\p\tuu_0}{\p\phi}$ helps provide the weighted $W^{1,\infty}$ bound of $\tuu_0$ and eventually leads to the desired diffusive limit.

In both flat and convex domains, the proof relies on expanding the boundary layer to higher order which provides sufficient $\e$ power to close the remainder estimate.
However, in non-convex domains, as \cite{AA006, AA020} reveals, the Milne problem with or without geometric correction does not guarantee $W^{1,\infty}$ regularity of the boundary layer. As a consequence, we cannot even bound the remainder induced by the leading-order boundary layer.

In this paper, we will employ a fresh approach to justify the $L^2$ diffusive expansion in general bounded domains with a cutoff boundary layer and the novel remainder estimate. We intend to show that
\begin{align}\label{intro 03}
    \lim_{\e\rt0}\tnnm{u^{\e}-\u_0-\uu_0-\uuu_0}=0
\end{align}
for the leading-order interior solution $\u_0$, boundary layer $\uu_0$, and initial layer $\uuu_0$.

It is well-known that the classical bounds for the remainder $\re:=u^{\e}-\u_0-\uu_0-\uuu_0$ read (see \cite{AA005,AA012,AA016})
\begin{align}
\tnm{\re(t)}+\e^{-\frac{1}{2}}\tnnms{\re}{\Gamma_+}+\e^{-1}\tnnm{\ire}&\ls\oo\tnnm{\bre}+1,\\
\tnnm{\bre}&\ls \e^{-1}\tnnm{\ire}+\tnnms{\re}{\Gamma_+}+\e^{\frac{1}{2}}\tnm{\re(t)}+1,\label{intro 05}
\end{align}
which yield
\begin{align}\label{intro 04}
    \tnm{\re(t)}+\e^{-\frac{1}{2}}\tnnms{\re}{\Gamma_+}+\e^{-1}\tnnm{\ire}+\tnnm{\bre}\ls1.
\end{align}
Clearly, without expanding to higher-order boundary layers, this is insufficient for the validity of \eqref{intro 03}.

The bottleneck of \eqref{intro 04} lies in the $\bre$ bound \eqref{intro 05}. 
Initiated from \cite{AA020} for the steady problem with the in-flow boundary, we design several delicate test functions in the kernel estimate to obtain  
\begin{align}\label{intro 06}
\tnnm{\bre}\ls \tnnm{\ire}+\tnnms{\re}{\Gamma_+}+\e^{\frac{1}{2}}\tnm{\re(t)}+\e^{\frac{1}{2}},
\end{align}
which will lead to the desired remainder estimate
\begin{align}
    \tnm{\re(t)}+\e^{-\frac{1}{2}}\tnnms{\re}{\Gamma_+}+\e^{-1}\tnnm{\ire}+\e^{-\frac{1}{2}}\tnnm{\bre}\ls1.
\end{align}

The key to achieve \eqref{intro 06} is a tricky combination of three conservation laws. We introduce the auxiliary function $\test(t,\vx)$ satisfying $-\Delta_x\test=\bre$ and $\test|_{\p\Omega}=0$. For the remainder equation
\begin{align}
    \e\dt\re+\vw\cdot\nx\re+\e^{-1}\big(\ire\big)=\ss,
\end{align}
testing against $\e^{-1}\test$ yields
\begin{align}\label{kk 01}
    \dbr{\dt\bre,\test}-\e^{-1}\dbbr{\ire,\vw\cdot\nx\test}=\e^{-1}\dbr{\ss,\test},
\end{align}
the choice of test function $w\cdot\nx\test$ yields
\begin{align}\label{kk 02}
    \e\dbbr{\dt\re,w\cdot\nx\test}+\bbr{\re,w\cdot\nx\test}_{\Gamma_+} - \bbr{\g, w\cdot\nx\test}_{\Gamma_-} \\
    -\dbbr{\re,\vw\cdot\nx\big(w\cdot\nx\test\big)}+\e^{-1}\dbbr{\ire,w\cdot\nx\test}
    &=\dbbr{\ss,w\cdot\nx\test},\no
\end{align}
while the test function $\e\dt\test$ leads to
\begin{align}\label{kk 03}
    \e^2\dbbr{\dt\bre,\dt\test}-\e\dbbr{\ire,\vw\cdot\nx\dt\test}=\e\dbbr{\ss,\dt\test}.
\end{align}
On the one hand, the linear combination of \eqref{kk 01} and \eqref{kk 02} crucially cancels $\e^{-1}\dbbr{\ire,w\cdot\nx\test}$ so that it kills the worst contribution of $\e^{-1}\tnnm{\ire}$, and also provides the control of $-\dbbr{\bre,\vw\cdot\nx\big(w\cdot\nx\test\big)}\simeq\tnnm{\bre}^2$. 
On the other hand, the estimate of $\e\dbbr{\dt\re,w\cdot\nx\test}\sim \e\dbbr{\re,w\cdot\nx\dt\test}$ (up to some good terms) calls for the control of $\dt\nx\test$, which is in turn provided by \eqref{kk 03} as $\e^2\dbbr{\dt\bre,\dt\test}=\e^2\btnnm{\nx\dt\test}^2$.

Combined with the key favorable sign of $\dbbr{\dt\bre,\test}\sim\btnm{\nx\test(t)}^2$ and a careful analysis of the source terms $\e^{-1}\dbr{\ss,\test}$, $\dbbr{\ss,w\cdot\nx\test}$, and $\e\dbbr{\ss,\dt\test}$ using Hardy's inequality and delicately chosen norms, we are able to create an extra gain of $\e^{\frac{1}{2}}$ for the kernel bound.  We then conclude the remainder estimate without any further expansion of higher-order boundary layers.

The remaining of this paper is structured as follows: Section~\ref{Sec:asymptotic} focuses on the asymptotic expansion for the interior solution, boundary and initial layers. Section~\ref{Sec:remainder-eq} presents the setup of the remainder equation, and Section~\ref{Sec:remainder-est} elaborates the estimates of $\ire$ and $\bre$. 
Finally, the main theorem for the inflow case is proved in Section~\ref{Sec:diffusive-limit}, while the diffuse and specular boundary problems are discussed in Section~\ref{Sec:diffuse-BC} and Section~\ref{Sec:specular-BC}, respectively.

\bigskip
\section{Asymptotic Analysis}\label{Sec:asymptotic}

We seek a solution to \eqref{transport} in the form of 
\begin{align}\label{expand}
u^{\e}=\u+\uuu+\uu+\re=\left(\u_0+\e\u_1+\e^2\u_2\right)+\left(\uuu_0+\e\uuu_1\right)+\uu_0+\re,
\end{align}
where the interior solution 
\begin{align}\label{expand 1}
\u(t,x,w):= \u_0(t,x,w)+\e\u_1(t,x,w)+\e^2\u_2(t,x,w),
\end{align}
the initial layer 
\begin{align}\label{expand 2'}
\uuu(\tau,x,w):= \uuu_0(\tau,x,w)+\e\uuu_1(\tau,x,w),
\end{align}
and the boundary layer 
\begin{align}\label{expand 2}
\uu(t;\eta,\iota_1,\iota_2;\phi,\psi):= \uu_0(t;\eta,\iota_1,\iota_2;\phi,\psi).
\end{align}
Here, $\u_0$, $\u_1$, $\u_2$, $\uuu_0$, $\uuu_1$, and $\uu_0$ will be constructed in the following subsections, and $\re(t,x,w)$ is the remainder.

\subsection{Interior Solution}\label{sec:interior}

Inserting the asymptotic expansion ansatz $u^{\e}\sim \sum_{k=0}^{\infty}\u_k\,\e^k$ into \eqref{transport}, we get a hierarchy of equations by looking at each order of $\e$. 
Following the analysis in \cite{AA016}, we deduce that
\begin{align}
&\u_0=\bu_0,\qquad
\dt\bu_0-\Delta_x\bu_0=0,\label{expand 4}\\ 
&\u_1=\bu_1-\vw\cdot\nx\u_{0},\qquad
\dt\bu_1-\Delta_x\bu_1=0,\label{expand 4'}\\ 
&\u_2=\bu_2-\vw\cdot\nx\u_{1}-\dt\u_0,\qquad
\dt\bu_2-\Delta_x\bu_2=0. \label{expand 4''}
\end{align}
The initial and boundary conditions for $\bu_0$, $\bu_1$, and $\bu_2$ are determined by the initial and boundary layers.

\subsection{Initial Layer}\label{sec:initial-layer}

Based on the principle of dominant balance, 
we find the correct scaling for the time variable is $\tau=\e^{-2}t$, and so $\p_t=\e^{-2}\p_{\tau}$. 
Under the substitution $t\mapsto\tau$, the rescaled equation of \eqref{transport} for the initial layer reads
\begin{align}\label{initial equation}
    \p_\tau \uuu+\e\vw\cdot\nabla_x \uuu+\big(\uuu-\overline{\uuu}\big)=0.
\end{align}

Inserting the expansion $\uuu\sim \sum_{k=0}^{\infty}\uuu_k\,\e^k$ into \eqref{initial equation} and comparing the order of $\e$, we find that
\begin{align}
    \p_{\tau}\uuu_0+\uuu_0-\overline{\uuu_0}&=0,\\
    \p_{\tau}\uuu_1+\uuu_1-\overline{\uuu_1}&=-\vw\cdot\nx\uuu_0.
\end{align}

Let us consider the general initial layer problem 
\begin{align} \label{Pb:initial-layer}
        \left\{
        \begin{array}{l}
        \dfrac{\ud\il}{\ud\tau}+\il-\iil=S,\\\rule{0ex}{1.5em}
        \il(0,\vx,\vw)=\il_o(\vx,\vw).
        \end{array}
        \right.
    \end{align}
Here, $S(\tau,x,w)$ is the forcing term, and $\il_o(\vx,\vw)$ is the given initial data.
We are interested in the solution $\il(\tau,x,w)$ that satisfies
\begin{align} \label{initial-layer-decay}
    \lim_{\tau\rt\infty}\il(\tau,\vx,\vw)=\il_{\infty}(\vx)
\end{align}
for some $\il_{\infty}(\vx)$ that does not depend on $w$.

The following proposition guarantees the solvability and regularity of the above initial layer problem: 

\begin{proposition}[Initial layer problem] \label{prop:initial-wellposedness}
Let $k\in\mathbb{N}$.
    Assume 
    \begin{align}
        \nm{\il_o}_{W^{k,\infty}_xL^{\infty}_w} +\nnm{\ue^{\alpha\tau}S}_{L^{\infty}_\tau W^{k,\infty}_xL^{\infty}_w} \ls 1
    \end{align}
    for some $\alpha>0$.
    Then there exist a unique solution $\il(\tau,x,w)\in L^{\infty}_\tau W^{k,\infty}_xL^{\infty}_w$ to \eqref{Pb:initial-layer} and a $\il_{\infty}(\vx)\in W^{k,\infty}_x$ such that \eqref{initial-layer-decay} holds in the sense of 
    \begin{align} \label{initial-layer-decay'}
    \nm{\il_{\infty}}_{W^{k,\infty}_x}+\nnm{\ue^{\beta\tau}(\il-\il_{\infty})}_{L^{\infty}_\tau W^{k,\infty}_xL^{\infty}_w}\ls 1
    \end{align}
    for any $0<\beta\leq \min\{1,\alpha\}$.
\end{proposition}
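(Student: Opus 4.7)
The essential observation is that \eqref{Pb:initial-layer} carries no spatial derivatives, so $\vx$ enters only as a parameter and the problem decouples pointwise in $\vx$. I would split $\il=\iil+(\il-\iil)$ and treat the two pieces separately by direct integration, since each satisfies a very simple equation in $\tau$ alone.

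\textbf{Step 1: solve for the mean.} Averaging \eqref{Pb:initial-layer} in $\vw$ kills the collision term and gives the trivial ODE $\p_\tau \iil = \overline{S}$, so
\begin{align*}
\iil(\tau,\vx) = \overline{\il_o}(\vx) + \int_0^\tau \overline{S}(s,\vx)\,\ud s.
\end{align*}
The hypothesis $\nnm{\ue^{\alpha\tau}S}_{L^\infty_\tau L^\infty_{x,w}}\ls 1$ ensures absolute integrability in $\tau$, so I would define
\begin{align*}
\il_\infty(\vx) := \overline{\il_o}(\vx) + \int_0^\infty \overline{S}(s,\vx)\,\ud s,
\end{align*}
which satisfies $\nm{\il_\infty}_{L^\infty_x}\ls 1$, and then rewrite $\iil(\tau,\vx)-\il_\infty(\vx) = -\int_\tau^\infty \overline{S}(s,\vx)\,\ud s$ to deduce $\abs{\iil-\il_\infty}\ls \alpha^{-1}\ue^{-\alpha\tau}$.

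\textbf{Step 2: kernel part via Duhamel.} Subtracting, $\il-\iil$ satisfies the scalar ODE (in $\tau$, with $\vx,\vw$ frozen) $\p_\tau(\il-\iil)+(\il-\iil)=S-\overline{S}$, whose solution is
\begin{align*}
(\il-\iil)(\tau,\vx,\vw) = \ue^{-\tau}(\il_o-\overline{\il_o})(\vx,\vw) + \int_0^\tau \ue^{-(\tau-s)}(S-\overline{S})(s,\vx,\vw)\,\ud s.
\end{align*}
A direct computation of the convolution $\int_0^\tau \ue^{-(\tau-s)}\ue^{-\alpha s}\,\ud s$ yields an exponential decay at rate $\min\{1,\alpha\}$, with an implicit constant depending on $\beta$ (slightly smaller than the rate absorbs the borderline case $\beta=\alpha=1$ where a linear factor $\tau$ appears). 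Adding the bounds from Step 1 gives $\ue^{\beta\tau}\abs{\il-\il_\infty}\ls 1$.

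\textbf{Step 3: regularity, uniqueness.} Because the equation has no $x$-transport, spatial derivatives $\p_x^\nu$ with $\abs\nu\leq k$ commute with \eqref{Pb:initial-layer}. Applying $\p_x^\nu$ yields the same initial-layer problem with data $\p_x^\nu\il_o$ and forcing $\p_x^\nu S$, which satisfy the same hypotheses by assumption, so Steps 1--2 deliver the full $W^{k,\infty}_x$ bound \eqref{initial-layer-decay'}. Uniqueness is immediate: any difference of two solutions solves the homogeneous problem with zero initial data, its mean satisfies $\p_\tau\iil=0$ and hence vanishes, while the kernel part solves $\p_\tau f+f=0$ with $f(0)=0$ and is also zero. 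The only (minor) obstacle is the critical rate $\beta=\min\{1,\alpha\}=1$ in the Duhamel convolution, which is handled by taking $\beta$ strictly smaller when needed, thereby absorbing the polynomial factor into a slightly slower exponential.
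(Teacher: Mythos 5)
Your proof matches the paper's own argument exactly: average in $w$ to isolate $\iil$, solve the fluctuation via the integrating factor $\ue^{\tau}$, define $\il_\infty=\overline{\il}_o+\int_0^\infty\overline{S}$, and commute spatial derivatives since the ODE in $\tau$ carries no $x$-transport. Your observation about the borderline case $\beta=\alpha=1$ is a genuine (if minor) catch: the statement as written permits $\beta\le\min\{1,\alpha\}$, yet the Duhamel convolution produces a factor $\tau\ue^{-\tau}$ precisely at $\alpha=1$, and the paper's parallel Milne proposition (Proposition~\ref{prop:boundary-wellposedness}) uses strict inequality $\beta<\min\{1,\alpha\}$, suggesting the non-strict inequality here is a small oversight that your argument correctly flags and sidesteps.
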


\begin{proof}

Based on ODE theory, the initial value problem \eqref{Pb:initial-layer} admits a unique solution.
We first integrate the equation in $w$ to find $\p_\tau\overline{\il}=\overline{S}$ 
and so $\iil = \overline{\il}_o + \int_0^\tau \overline{S}$.
Then subtracting this from \eqref{Pb:initial-layer}, we get
\begin{align} \label{Pb:initial-layer'}
        \left\{
        \begin{array}{l}
        \dfrac{\ud}{\ud\tau}\big(\il-\overline{\il}\big)+\big(\il-\overline{\il}\big)=S-\overline{S},\\\rule{0ex}{1.5em}
        \big(\il-\overline{\il}\big)\big|_{\tau=0}=\il_o-\overline{\il}_o,
        \end{array}
        \right.
\end{align}
which can be solved using the integrating factor $\ue^\tau$. We have 
\begin{align}
    \il-\overline{\il} = \ue^{-\tau}\big(\il_o-\overline{\il}_o\big) 
    + \int_0^\tau \ue^{\tau'-\tau}\big(S-\overline{S}\big)\,\ud\tau',
\end{align}
and thus
\begin{align}
    \il = \overline{\il}_o + \ue^{-\tau}\big(\il_o-\overline{\il}_o\big) 
    + \int_0^\tau \Big\{ \overline{S} + \ue^{\tau'-\tau}\big(S-\overline{S}\big)\Big\}\,\ud\tau'.
\end{align}
Let 
\begin{align} \label{theta-infinity}
    \il_{\infty}:=\overline{\il}_{\infty}=\lim_{\tau\rt\infty}\overline{\il}=\overline{\il}_o + \int_0^\infty \!\overline{S}\,\ud\tau.
\end{align}
Then it is straightforward to verify \eqref{initial-layer-decay'}.
\end{proof}

Let $\il_0$ be the solution to \eqref{Pb:initial-layer} with $S=0$ and the initial data $\il_{0,o}=u_o$ satisfying \eqref{assumption}. Then by \eqref{theta-infinity} we have $\il_{0,\infty}=\overline{u}_o$.
Define the leading-order initial layer to be
\begin{align}\label{initial-layer-0}
\uuu_0(\tau,\vx,\vw):=\il_0(\tau,\vx,\vw)-\il_{0,\infty}(x),
\end{align}
so that it satisfies
\begin{align}\label{Pb:initial-layer-0}
        \left\{
        \begin{array}{l}
        \p_{\tau}\uuu_0+\uuu_0-\overline{\uuu_0}=0,\\\rule{0ex}{1.5em}
        \uuu_0(0,\vx,\vw)=u_o(\vx,\vw)-\overline{u}_o(\vx),\\\rule{0ex}{1.5em}
        \ds\lim_{\tau\rt\infty}\uuu_0(\tau,\vx,\vw)=0,
        \end{array}
        \right.
\end{align}
as well as 
\begin{align}
    \nnm{\ue^{\tau}\uuu_0}_{L^{\infty}_\tau W^{3,\infty}_xL^{\infty}_w}\ls 1.
\end{align}

Next, we define the second-order initial layer to be
\begin{align}\label{initial-layer-1}
\uuu_1(\tau,\vx,\vw):=\il_1(\tau,\vx,\vw)-\il_{1,\infty}(x),
\end{align}
where $\il_1$ is determined by solving \eqref{Pb:initial-layer} with $S=-\vw\cdot\nx\uuu_0$ and $\il_{1,o}=\vw\cdot\nx\u_0(0,\vx,\vw)$, 
and so $\il_{1,\infty}(x)=\lim_{\tau\rt\infty}\overline{\il}_1(\tau,\vx)$ whose explicit formula is given by \eqref{theta-infinity}.
Then $\uuu_1$ satisfies 
\begin{align}\label{Pb:initial-layer-1}
        \left\{
        \begin{array}{l}
        \p_{\tau}\uuu_1+\uuu_1-\overline{\uuu_1}=-\vw\cdot\nx\uuu_0,\\\rule{0ex}{1.5em}
        \uuu_1(0,\vx,\vw)=\vw\cdot\nx\u_0(0,\vx,\vw)-\il_{1,\infty}(\vx),\\\rule{0ex}{1.5em}
        \ds\lim_{\tau\rt\infty}\uuu_1(\tau,\vx,\vw)=0,
        \end{array}
        \right.
\end{align}
as well as 
\begin{align}\label{theta_1,infty}
    \nm{\il_{1,\infty}}_{W^{2,\infty}_x}+
    \nnm{\ue^{\tau}\uuu_1}_{L^{\infty}_\tau W^{2,\infty}_xL^{\infty}_w}\ls 1.
\end{align}

\subsection{Boundary Layer}\label{sec:boundary-layer}

We first perform change of coordinates depending on the geometry of the boundary.
For the smooth surface $\p\Omega$, there exists an orthogonal curvilinear coordinates system $(\iota_1,\iota_2)$ such that the coordinate lines coincide with the principal directions at any $\vx_0\in\p\Omega$. Assume that $\p\Omega$ is parameterized by $\vr=\vr(\iota_1,\iota_2)$, and let $\vt_i:=\pl_i^{-1}\p_{\iota_i}\vr$ ($i=1,2$) be the two orthogonal unit tangential vectors, where $\pl_i:=\abs{\p_{\iota_i}\vr}$.
Then $\{\vt_1,\vt_2,n\}$ form an orthonormal basis at each point in a neighborhood of the boundary.
Hence, we can define a new coordinate system $(\mn, \iota_1,\iota_2)$ near the boundary, where $\mn$ denotes the normal distance to the boundary surface $\p\Omega$ so that $\vx=\vr-\mn\vn$.
We then define the rescaled normal spatial variable $\eta=\e^{-1}\mn$ again due to the principle of dominant balance, and so $\p_\mn=\e^{-1}\p_\eta$.

Define also the spherical substitution $w\mapsto(\phi,\psi)$ for the velocity $w\in\s^2$ via
\begin{align}\label{velocity}
-\vw\cdot\vn=\sin\phi,\quad
\vw\cdot\vt_1=\cos\phi\sin\psi,\quad
\vw\cdot\vt_2=\cos\phi\cos\psi,
\qquad \text{for }\: \phi\in[-\tfrac{\pi}{2},\tfrac{\pi}{2}],\;\, 
\psi\in[0,2\pi).
\end{align}

Under the change of variables $(\vx,\vw)\mapsto(\eta,\iota_1,\iota_2;\phi,\psi)$ in the phase space, the transport operator becomes
\begin{align}\label{expand 6}
\e\dt+\vw\cdot\nx=&\;\e\dt+\e^{-1}\sin\phi\,\frac{\p}{\p\eta}
+\frac{\rc_1\cos\phi\sin\psi}{\pl_1(\rc_1-\e\eta)}\frac{\p}{\p\iota_1}+\frac{\rc_2\cos\phi\cos\psi}{\pl_2(\rc_2-\e\eta)}\frac{\p}{\p\iota_2} \\
&\; -\bigg(\frac{\sin^2\psi}{\rc_1-\e\eta}+\frac{\cos^2\psi}{\rc_2-\e\eta}\bigg)\cos\phi\,\frac{\p}{\p\phi} \no\\
&\;+\frac{\sin\psi}{\rc_1-\e\eta}\bigg\{\frac{\rc_1\cos\phi}{\pl_1\pl_2}\Big[\vt_1\cdot\Big(\vt_2\times\big(\p_{\iota_1\iota_2}\vr\times\vt_2\big)\Big)\Big]
-\sin\phi\cos\psi\bigg\}\frac{\p}{\p\psi}\no\\
&\;
-\frac{\cos\psi}{\rc_2-\e\eta}\bigg\{\frac{\rc_2\cos\phi}{\pl_1\pl_2}\Big[\vt_2\cdot\Big(\vt_1\times\big(\p_{\iota_1\iota_2}\vr\times\vt_1\big)\Big)\Big]
-\sin\phi\sin\psi\bigg\}\frac{\p}{\p\psi},\no
\end{align}
where $R_i$ ($i=1,2$) is the radius of curvature.

Inserting the expansion $\uu\sim \sum_{k=0}^{\infty}\uu_k\,\e^k$ into the rescaled equation and comparing the order of $\e$ (see \eqref{expand 6}), we expect that the standard leading-order boundary layer $\uu_0(t;\eta,\iota_1,\iota_2;\phi,\psi)$ should satisfy
\begin{align}
\sin\phi\,\frac{\p\uu_0}{\p\eta}+\uu_0-\overline{\uu_0}=0.
\end{align}

Consider the general boundary layer problem (Milne problem) 
\begin{align} \label{Pb:boundary-layer}
\sin\phi\,\frac{\p\bl}{\p\eta}+\bl-\bbl=S,\qquad
\bbl(t;\eta,\iota_1,\iota_2):=\frac{1}{4\pi}\int_{-\pi}^{\pi}\int_{-\frac{\pi}{2}}^{\frac{\pi}{2}}\bl(t;\eta,\iota_1,\iota_2;\phi,\psi)\cos\phi\,\ud{\phi}\ud{\psi},
\end{align}
with boundary condition
\begin{align} \label{Pb:boundary-layer-BC}
\bl(t;0,\iota_1,\iota_2;\phi,\psi)=\rho(t;\iota_1,\iota_2;\phi,\psi)\ \ \text{ for}\ \ \sin\phi>0.
\end{align}
We are interested in the solution $\bl(t;\eta,\iota_1,\iota_2;\phi,\psi)$ that satisfies
\begin{align} \label{Pb:boundary-layer-decay}
    \lim_{\eta\rt\infty}\bl(t;\eta,\iota_1,\iota_2;\phi,\psi)=\bl_{\infty}(t;\iota_1,\iota_2)
\end{align}
for some $\bl_{\infty}(t;\iota_1,\iota_2)$ that does not depend on the velocity variables $(\phi,\psi)$. 

According to \cite[Section 4]{AA009}, we have the well-posedness and regularity of the above Milne problem:

\begin{proposition}[Milne problem] \label{prop:boundary-wellposedness}
    Assume 
    \begin{align}
        \nm{\rho}_{W^{2,\infty}_tW^{3,\infty}_{\iota_1,\iota_2}W^{1,\infty}_{w}} +\nnm{\ue^{\alpha\eta}S}_{W^{2,\infty}_tW^{3,\infty}_xW^{1,\infty}_w} \ls 1
    \end{align}
    for some $\alpha>0$.
    Then there exist a unique solution $\bl\in W^{2,\infty}_tW^{3,\infty}_{\iota_1,\iota_2}W^{1,\infty}_\psi L^{\infty}_{\eta,\phi}$ to \eqref{Pb:boundary-layer}\eqref{Pb:boundary-layer-BC} and a $\bl_{\infty}\in W^{2,\infty}_tW^{3,\infty}_{\iota_1,\iota_2}$ such that \eqref{Pb:boundary-layer-decay} holds in the sense of 
    \begin{align} \label{boundary-layer-decay}
    \nm{\bl_{\infty}}_{W^{2,\infty}_tW^{3,\infty}_{\iota_1,\iota_2}}
    +\nnm{\ue^{\beta\eta}\big(\bl-\bl_{\infty}\big)}_{W^{2,\infty}_tW^{3,\infty}_{\iota_1,\iota_2}W^{1,\infty}_\psi L^{\infty}_{\eta,\phi}}&\ls 1,\\
    \nnm{\ue^{\beta\eta}\sin\phi\,\p_\eta\big(\bl-\bl_{\infty}\big)}_{L^\infty}
    + \nnm{\ue^{\beta\eta}\sin\phi\,\p_\phi\big(\bl-\bl_{\infty}\big)}_{L^\infty}&\ls 1
    \end{align}
    for any $0<\beta< \min\{1,\alpha\}$.
\end{proposition}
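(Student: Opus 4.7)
The plan is to exploit that only $\sin\phi$ appears as a non-constant coefficient in \eqref{Pb:boundary-layer} and that $(t,\iota_1,\iota_2)$ enter purely through the data $\rho$ and $S$: I would fix these parameters, reduce to the classical one-dimensional Milne problem in $(\eta,\phi,\psi)$, and then propagate regularity in the passive variables afterwards. Existence, uniqueness, exponential decay, and the limit $\bl_\infty$ all follow from the standard spectral-gap / Case theory recorded in \cite[Section 4]{AA009}; the only new point requiring care is the weighted $\phi$-derivative bound, which is the obstruction recorded in the statement.

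\textbf{Step 1 (pointwise well-posedness and decay).} For frozen $(t,\iota_1,\iota_2)$, I would construct $\bl$ by a double regularization. On the finite slab $\eta\in[0,L]$, penalize the equation to
\begin{align*}
\sin\phi\,\p_\eta\bl^{L,\delta}+(1+\delta)\bl^{L,\delta}-\bbl^{L,\delta}=S,
\end{align*}
with $\bl^{L,\delta}|_{\eta=0}=\rho$ on $\sin\phi>0$ and a reflection condition at $\eta=L$. For $\delta>0$, integration along the characteristics of $\sin\phi\,\p_\eta$ makes $\bl\mapsto(1+\delta)^{-1}(\bbl+S)$ a contraction in $L^\infty_{\eta,\phi,\psi}$, producing a unique $\bl^{L,\delta}$ with an $(L,\delta)$-uniform bound. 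The maximum principle combined with the exponential decay of $S$ permits successive passages $L\to\infty$ and $\delta\to 0^+$. Spectral-gap analysis for $I-\overline{(\,\cdot\,)}$ selects a unique constant $\bl_\infty(t,\iota_1,\iota_2)$ and yields $\nm{\ue^{\beta\eta}(\bl-\bl_\infty)}_{L^\infty_{\eta,\phi,\psi}}\ls 1$ for any $0<\beta<\min\{1,\alpha\}$, the $1$ reflecting the intrinsic relaxation rate of the Milne operator and $\alpha$ being inherited from $S$.

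\textbf{Step 2 (passive-parameter regularity).} Since the Milne operator has no coefficients depending on $t$, $\iota_1$, $\iota_2$ or $\psi$, the derivatives $\p_t^j\p_{\iota_1}^{k_1}\p_{\iota_2}^{k_2}\p_\psi^m$ all commute with it. Applying Step 1 to each differentiated problem, whose data are $\p_t^j\p_{\iota_1}^{k_1}\p_{\iota_2}^{k_2}\p_\psi^m\rho$ and $\p_t^j\p_{\iota_1}^{k_1}\p_{\iota_2}^{k_2}\p_\psi^m S$ (controlled by hypothesis for $j\leq 2$, $k_1+k_2\leq 3$, $m\leq 1$), yields the stated regularity of $\bl-\bl_\infty$; sending $\eta\to\infty$ transfers the same regularity to $\bl_\infty$.

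\textbf{Step 3 (weighted $\eta$- and $\phi$-bounds, the main obstacle).} The $\sin\phi\,\p_\eta$ bound is almost free: rewriting the equation as $\sin\phi\,\p_\eta\bl=\bbl-\bl+S$ and using $\p_\eta\bl_\infty=0$ gives $\nm{\ue^{\beta\eta}\sin\phi\,\p_\eta(\bl-\bl_\infty)}_{L^\infty}\ls 1$ directly from Step 1. The $\phi$-derivative is where the grazing-set singularity bites: formal differentiation in $\phi$ produces the source $\cos\phi\,\p_\eta\bl$, which is not in $L^\infty$ as $\sin\phi\to 0$, so an unweighted $W^{1,\infty}_\phi$ bound is impossible. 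The remedy, standard for the Milne problem, is to introduce $V:=\sin\phi\,\p_\phi(\bl-\bl_\infty)$; multiplying the $\phi$-differentiated equation by $\sin\phi$ and substituting $\sin\phi\,\p_\eta\bl=\bbl-\bl+S$ converts the singular term into $\cos\phi(\bbl-\bl+S)$, which is bounded uniformly. The resulting equation for $V$ is again of Milne type with bounded right-hand side and zero boundary data on $\sin\phi>0$ (after subtracting the contribution from $\p_\phi\rho$), and Steps 1--2 yield $\nm{\ue^{\beta\eta}\sin\phi\,\p_\phi(\bl-\bl_\infty)}_{L^\infty}\ls 1$. This trading of the singular normal derivative for a bounded averaged quantity via the $\sin\phi$ weight is the conceptual heart of the proof and the reason the proposition records only $\sin\phi$-weighted $\phi$- and $\eta$-bounds.
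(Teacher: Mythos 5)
The paper does not give its own proof of this proposition but simply invokes \cite[Section 4]{AA009}; your sketch reproduces the standard Milne-problem construction used there and in its predecessors (Bensoussan--Lions--Papanicolaou, Bardos--Santos--Sentis, and the later $\e$-Milne papers), and the essential ideas — penalized finite-slab approximation, $L^\infty$ bound via characteristics and a $\delta$-contraction, exponential relaxation to a constant $\bl_\infty$ from the spectral gap of $I-\overline{(\cdot)}$, commutation with the passive variables $t,\iota_1,\iota_2,\psi$, and the $\sin\phi$-weighted $\phi$-derivative — are all present and correctly organized. The one imprecision is calling the equation for $V=\sin\phi\,\p_\phi(\bl-\bl_\infty)$ ``of Milne type'': since $\p_\phi\bbl=0$, the averaging term drops and $V$ solves the purely damped transport equation $\sin\phi\,\p_\eta V+V=\sin\phi\,\p_\phi S+\cos\phi\,(\bbl-\bl+S)$, which is simpler than a Milne problem and can be integrated directly along characteristics (with data $\sin\phi\,\p_\phi\rho$ at $\eta=0$ on $\sin\phi>0$ and decay at infinity on $\sin\phi<0$) — so this only makes Step~3 easier, not harder, and does not affect the correctness of the argument.
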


Let $\bl_0$ be the solution to \eqref{Pb:boundary-layer} with $S=0$ and the boundary data $\rho(t;\iota_1,\iota_2;\phi,\psi)=g(t,\vx_0,\vw)$ satisfying \eqref{assumption}, and so there exists $\bl_{0,\infty}(t;\iota_1,\iota_2)=\lim_{\eta\rt\infty}\bl_0(t;\eta,\iota_1,\iota_2;\phi,\psi)$. 
Then $\bll_0:=\bl_0-\bl_{0,\infty}$ satisfies 
    \begin{align}
        \left\{ 
        \begin{array}{l}
        \sin\phi\,\dfrac{\p\bll_0}{\p\eta}+\bll_0-\bbll_0=0,\\\rule{0ex}{1.2em}
        \bll_0\big|_{\{\eta=0,\,\sin\!\phi>0\}}=g-\bl_{0,\infty},\\\rule{0ex}{1.5em}
        \ds\lim_{\eta\rt\infty}\bll_0=0,
        \end{array}
        \right.
    \end{align}
as well as
\begin{align} 
    \nm{\bl_{0,\infty}}_{W^{2,\infty}_tW^{3,\infty}_{\iota_1,\iota_2}}
    +\nnm{\ue^{\beta\eta}\bll_0}_{W^{2,\infty}_tW^{3,\infty}_{\iota_1,\iota_2}W^{1,\infty}_\psi L^{\infty}_{\eta,\phi}}&\ls 1,\label{Phi_0,infty}\\
    \nnm{\ue^{\beta\eta}\sin\phi\,\p_\eta\bll_0}_{L^\infty}
    + \nnm{\ue^{\beta\eta}\sin\phi\,\p_\phi\bll_0}_{L^\infty}&\ls 1
    \end{align}
for any $0<\beta< 1$.

Additionally, let $\chi(r)\in C^{\infty}(\r)$
and $\ch(r)=1-\chi(r)$ be smooth cutoff functions satisfying $\chi(r)=1$ for $\abs{r}\leq1$ and $\chi(r)=0$ for $\abs{r}\geq2$.
We define the cutoff boundary layer 
\begin{align}\label{boundary layer}
\uu_0(t;\eta,\iota_1,\iota_2;\phi,\psi):=\chi(\e^{\frac{1}{2}}\eta)\ch(\e^{-1}\phi)\bll_0(t;\eta,\iota_1,\iota_2;\phi,\psi).
\end{align}
Here, the space cutoff $\chi(\e^{\frac{1}{2}}\eta)$ restricts the boundary layer effect within a thin layer near the boundary and helps avoid self-interactions, while the velocity cutoff $\ch(\e^{-1}\phi)$ truncates the grazing singularity.
With these cutoffs, the modified boundary layer $\uu_0$ satisfies 
\begin{align}\label{Pb:boundary-layer'}
        \left\{
        \begin{array}{l}
        \sin\phi\,\dfrac{\p\uu_0}{\p\eta}+\uu_0-\overline{\uu_0}=
        \sin\phi\cdot\e^{\frac{1}{2}}\chi'(\e^{\frac{1}{2}}\eta)\ch(\e^{-1}\phi)\bll_0 
        + \chi(\e^{\frac{1}{2}}\eta)\Big[\overline{\chi(\e^{-1}\phi)\bll_0} -\chi(\e^{-1}\phi)\overline{\bll}_0 \Big], \\\rule{0ex}{1.5em}
        \uu_0(t;0,\iota_1,\iota_2;\phi,\psi)=\ch(\e^{-1}\phi)\Big[g(t;\iota_1,\iota_2;\phi,\psi)-\bl_{0,\infty}(t;\iota_1,\iota_2)\Big] \ \ \text{ for}\ \ \sin\phi>0,\\\rule{0ex}{1.5em}
        \ds\lim_{\eta\rt\infty}\uu_0(t;\eta,\iota_1,\iota_2;\phi,\psi)=0.
        \end{array}
        \right.
\end{align}
Moreover, it holds that 
\begin{align} \label{boundary-layer-decay''}
    \nnm{\ue^{\beta\eta\,}\uu_0}_{W^{2,\infty}_tW^{3,\infty}_{\iota_1,\iota_2}W^{1,\infty}_\psi L^{\infty}_{\eta,\phi}}\ls 1
    \end{align}
for any $0<\beta< 1$.

\subsection{Matching Procedure}\label{Subsec:matching}

Now we construct the initial layer, boundary layer, and interior solution for each level of the asymptotic expansion in \eqref{expand} via a matching procedure.

\paragraph{\underline{\textit{Construction of $\uuu_0$, $\uu_0$, and $\u_0$}}}

For the leading-order expansion,
we want to enforce the matching initial condition 
\begin{align}
    \big(\u_0+\uuu_0\big)\big|_{t=0} = u_o ,
\end{align}
and the matching boundary condition 
\begin{align}
\big(\u_0+\uu_0\big)\big|_{\Gamma_-} = g +O(\e^{0+}) .
\end{align}
With the initial layer $\uuu_0$ given in \eqref{initial-layer-0} that satisfies \eqref{Pb:initial-layer-0}
and the boundary layer $\uu_0$ given in \eqref{boundary layer} that satisfies \eqref{Pb:boundary-layer'},
we require the interior solution $\u_0$ to satisfy the following initial-boundary value problem (combining \eqref{expand 4}):
\begin{align}\label{U_0-pb}
\left\{
\begin{array}{l}
\u_0=\bu_0,\quad
\dt\bu_0-\Delta_x\bu_0=0,\\\rule{0ex}{1.5em}
\overline\u_0(0,\vx)=\overline{u}_o(\vx),\\\rule{0ex}{1.5em}
\overline\u_0(t,\vx_0)=\bl_{0,\infty}(t;\iota_1,\iota_2)\ \ \text{ for}\ \
\vx_0\in\p\Omega,
\end{array}
\right.
\end{align}
which uniquely determines $\u_0(t,\vx,\vw)$ due to classical theory for the heat equation.
Also, with \eqref{assumption}, \eqref{Phi_0,infty}, and by the standard parabolic estimate (cf. \cite{Krylov2008}), we have
for any $2\leq\NN<\infty$ 
\begin{align}\label{expand 10}
    \nnm{\u_0}_{W^{2,\infty}_tW^{3,\NN}_x}+\nm{\u_0}_{W^{2,\infty}_tW^{3,\NN}_{\iota_1,\iota_2}}\ls 1.
\end{align}
Then the joint boundary value is 
\begin{align}
\big(\u_0+\uu_0\big)\big|_{\Gamma_-} = g - \chi(\e^{-1}\phi)\big(g-\bl_{0,\infty}\big) .
\end{align}
Here the additional term $\chi(\e^{-1}\phi)\big(g-\bl_{0,\infty}\big)\sim O(\e^{\frac{1}{p}})$ in $L^p_w$-norm due to the velocity cutoff.

\paragraph{\underline{\textit{Construction of $\uuu_1$ and $\u_1$}}}

For the next-order expansion,
we want to enforce the matching initial condition 
\begin{align}
    \big(\u_1+\uuu_1\big)\big|_{t=0} = 0 .
\end{align}
With the initial layer $\uuu_1$ given in \eqref{initial-layer-1} that satisfies \eqref{Pb:initial-layer-1},
we require the interior solution $\u_1$ to satisfy (combining \eqref{expand 4'}):
\begin{align}\label{U_1-pb}
\left\{
\begin{array}{l}
\u_1=\bu_1-\vw\cdot\nx\u_{0},\quad
\dt\bu_1-\Delta_x\bu_1=0,\\\rule{0ex}{1.5em}
\overline\u_1(0,\vx)=\il_{1,\infty}(\vx),\\\rule{0ex}{1.5em}
\overline\u_1(t,\vx_0)=0 \ \ \text{ for}\ \
\vx_0\in\p\Omega,
\end{array}
\right.
\end{align}
which uniquely determines $\u_1(t,\vx,\vw)$.
Also, with \eqref{expand 10} and \eqref{theta_1,infty}, we have
for any $2\leq\NN<\infty$
\begin{align}\label{expand 11}
    \nnm{\u_1}_{W^{2,\infty}_tW^{2,\NN}_xL^\infty_w}+\nm{\u_1}_{W^{2,\infty}_tW^{2,\NN}_{\iota_1,\iota_2}L^{\infty}_w}\ls 1.
\end{align}

\paragraph{\underline{\textit{Construction of $\u_2$}}}

Lastly, we solve for a higher-order expansion $\u_2(t,\vx,\vw)$ from (see \eqref{expand 4''})
\begin{align}\label{U_2-pb}
\left\{
\begin{array}{l}
\u_2=\bu_2-\vw\cdot\nx\u_{1}-\dt\u_0,\quad
\dt\bu_2-\Delta_x\bu_2=0,\\\rule{0ex}{1.5em}
\overline\u_2(0,\vx)=0,\\\rule{0ex}{1.5em}
\overline\u_2(t,\vx_0)=0 \ \ \text{ for}\ \
\vx_0\in\p\Omega.
\end{array}
\right.
\end{align}
In view of \eqref{expand 10} and \eqref{expand 11}, we have
for any $2\leq\NN<\infty$
\begin{align}\label{expand 11'}
    \nnm{\u_2}_{W^{1,\infty}_tW^{1,\NN}_xL^\infty_w}+\nm{\u_2}_{W^{1,\infty}_tW^{1,\NN}_{\iota_1,\iota_2}L^{\infty}_w}\ls 1.
\end{align}

To summarize, we have obtained the well-posedness and regularity estimates of the interior solution, initial layer, and boundary layer:
\begin{proposition}\label{prop:wellposedness}
    Assume \eqref{assumption} holds for the initial data $u_o$ and boundary data $g$.
    Let $\u_0, \u_1, \u_2$ be constructed via \eqref{U_0-pb}\eqref{U_1-pb}\eqref{U_2-pb}, $\uuu_0, \uuu_1$ in Subsection~\ref{sec:initial-layer}, and $\uu_0$ in Subsection~\ref{sec:boundary-layer}.
    Then we have for any $2\leq\NN<\infty$
    \begin{align}
        &\nnm{\u_0}_{W^{2,\infty}_tW^{3,\NN}_x}+\nm{\u_0}_{W^{2,\infty}_tW^{3,\NN}_{\iota_1,\iota_2}}\ls 1,\label{U_0-est}\\
        &\nnm{\u_1}_{W^{2,\infty}_tW^{2,\NN}_xL^\infty_w}+\nm{\u_1}_{W^{2,\infty}_tW^{2,\NN}_{\iota_1,\iota_2}L^{\infty}_w}\ls 1,\label{U_1-est}\\
        &\nnm{\u_2}_{W^{1,\infty}_tW^{1,\NN}_xL^\infty_w}+\nm{\u_2}_{W^{1,\infty}_tW^{1,\NN}_{\iota_1,\iota_2}L^{\infty}_w}\ls 1,\label{U_2-est}
    \end{align}
    and 
    \begin{align}
        &\nnm{\ue^{\tau}\uuu_0}_{L^{\infty}_\tau W^{3,\infty}_xL^{\infty}_w}\ls 1,\label{U^I_0-est}\\
        &\nnm{\ue^{\tau}\uuu_1}_{L^{\infty}_\tau W^{2,\infty}_xL^{\infty}_w}\ls 1,\label{U^I_1-est}\\
       &\nnm{\ue^{\beta\eta\,}\uu_0}_{W^{2,\infty}_tW^{3,\infty}_{\iota_1,\iota_2}W^{1,\infty}_\psi L^{\infty}_{\eta,\phi}}\ls 1 \label{U^B_0-est}
    \end{align}
    for any $0<\beta< 1$.
\end{proposition}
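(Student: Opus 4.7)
The plan is to read off each bound from the constructions in Subsections~\ref{sec:interior}--\ref{sec:boundary-layer}: the layer pieces come directly from Propositions~\ref{prop:initial-wellposedness} and \ref{prop:boundary-wellposedness}, while the interior pieces come from standard parabolic $L^p$ theory applied iteratively. Order matters, since the boundary-layer analysis supplies the Dirichlet datum $\bl_{0,\infty}$ for $\u_0$, and successive orders feed their data into the next.

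First I would derive \eqref{U^B_0-est} by applying Proposition~\ref{prop:boundary-wellposedness} with $S=0$ and $\rho=g$ (controlled by \eqref{assumption}); this produces $\bll_0=\bl_0-\bl_{0,\infty}$ together with $e^{\beta\eta}$ decay in the target norm for any $\beta<1$. Since the cutoff $\chi(\e^{\frac{1}{2}}\eta)\ch(\e^{-1}\phi)$ depends only on $(\eta,\phi)$, multiplication by it commutes with every derivative appearing in the target norm and is uniformly bounded in $L^{\infty}_{\eta,\phi}$, so \eqref{U^B_0-est} follows with no loss in $\e$. The initial-layer bound \eqref{U^I_0-est} is Proposition~\ref{prop:initial-wellposedness} with $S=0$, $\il_o=u_o-\overline{u}_o\in W^{3,\infty}_xL^{\infty}_w$, and $\alpha$ arbitrary.

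Next I would solve \eqref{U_0-pb} by classical parabolic $L^p$ theory (cf.~\cite{Krylov2008}): with $\overline{u}_o\in W^{3,\infty}_x$ and $\bl_{0,\infty}\in W^{2,\infty}_tW^{3,\infty}_{\iota_1,\iota_2}$ from the preceding step, and with the corner compatibility $\overline{u}_o|_{\p\Omega}=\bl_{0,\infty}(0,\cdot)$ inherited from \eqref{compatibility}, one obtains \eqref{U_0-est}. With $\u_0$ in hand, the estimate \eqref{U^I_1-est} follows from Proposition~\ref{prop:initial-wellposedness} applied to \eqref{Pb:initial-layer-1} (whose source $-\vw\cdot\nx\uuu_0$ decays like $e^{-\tau}$ in $W^{2,\infty}_xL^{\infty}_w$ and whose initial datum uses \eqref{U_0-est} and \eqref{theta_1,infty}). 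Solving \eqref{U_1-pb} and \eqref{U_2-pb} the same way yields \eqref{U_1-est}--\eqref{U_2-est}: the boundary/source data at each order ($\vw\cdot\nx\u_0$ for $\u_1$, and $\dt\u_0$, $\vw\cdot\nx\u_1$ for $\u_2$) are one space derivative less regular, producing the one-derivative loss visible in the statements, and since the boundary data for $\u_1,\u_2$ vanish identically on $\p\Omega$, corner compatibility at higher orders is automatic.

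The main technical point — essentially the only place where the argument is not a direct quotation of an earlier proposition — is compatibility at the corner $\{0\}\times\p\Omega$ at the leading order; without it the Dirichlet heat equation \eqref{U_0-pb} would lose regularity. This is guaranteed by the hypothesis \eqref{compatibility} combined with the observation that a constant-in-$(t,\iota_1,\iota_2)$ boundary datum for the Milne problem produces a $\bl_{0,\infty}$ equal to that constant, by the uniqueness assertion of Proposition~\ref{prop:boundary-wellposedness}. Everything else is bookkeeping.
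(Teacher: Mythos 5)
Your proposal is correct and follows essentially the same route as the paper, which establishes Proposition~\ref{prop:wellposedness} not through a separate proof but through the constructions in Subsections~\ref{sec:interior}--\ref{Subsec:matching} (Propositions~\ref{prop:initial-wellposedness} and~\ref{prop:boundary-wellposedness} for the layers, standard parabolic $L^p$ theory for each $\u_k$ in the matching order, and the cutoff's commutation with the $t,\iota_1,\iota_2,\psi$ derivatives for \eqref{U^B_0-est}). One small slip: the compatibility condition \eqref{compatibility} makes the Milne boundary datum at $t=0$ constant in the \emph{velocity} variables $(\phi,\psi)$, not in $(t,\iota_1,\iota_2)$ as you wrote; it is this velocity constancy that forces $\bl_0(0;\cdot)\equiv C(x_0)$ and hence $\bl_{0,\infty}(0;\cdot)=\overline u_o|_{\p\Omega}$ by uniqueness, which is what delivers the corner match.
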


\bigskip
\section{Remainder Equation}\label{Sec:remainder-eq}

Denote the approximate solution 
\begin{align}
    \ua:=\left(\u_0+\e\u_1+\e^2\u_2\right)+\left(\uuu_0+\e\uuu_1\right)+\uu_0,
\end{align}
and so the remainder
\begin{align}\label{expand-R}
    \re:= u^\e - \ua.
\end{align}
Inserting \eqref{expand-R} into \eqref{transport}, we have
\begin{align}
    &\e\dt\big(\ua+\re\big)+\vw\cdot\nx\big(\ua+\re\big)+\e^{-1}\big(\ua+\re\big)-\e^{-1}\big(\bua+\bre\big)=0,\\
    &\big(\ua+\re\big)\big|_{t=0}=u_o,\qquad \big(\ua+\re\big)\big|_{\Gamma_-}=g,\no
\end{align}
which yields
\begin{align}
    &\e\dt\re+\vw\cdot\nx\re+\e^{-1}\big(\re-\bre\big)
    =-\e\dt\ua -\vw\cdot\nx\ua-\e^{-1}\big(\ua-\bua\big),\\ 
    &\re|_{t=0}=u_o-\ua|_{t=0},\qquad
    \re|_{\Gamma_-}=g-\ua|_{\Gamma_-}.\no
\end{align}

Consider the initial-boundary value problem for the remainder $\re(t,\vx,\vw)$:
\begin{align}\label{remainder}
\left\{
\begin{array}{l}\displaystyle
\e\dt\re+\vw\cdot\nabla_x \re+\e^{-1}\big(\ire\big)=\ss\ \ \text{ in}\ \ \rp\times\Omega\times\s^2,\\\rule{0ex}{1.5em}
\re(0,\vx,\vw)=\z(\vx,\vw)\ \ \text{ in}\ \ \Omega\times\s^2,\\\rule{0ex}{1.5em}
\re(t,\vx_0,\vw)=\g(t,\vx_0,\vw)\ \ \text{ for}\
\ \vx_0\in\p\Omega\ \ \text{and}\ \ \vw\cdot\vn(\vx_0)<0,
\end{array}
\right.
\end{align}
where 
\begin{align}
\bre(t,\vx)=\frac{1}{4\pi}\int_{\s^2}\re(t,\vx,\vw)\,\ud{\vw}.
\end{align}

In particular, we know the initial data is given by
\begin{align}\label{d:z}
    \z:=\e^2\vw\cdot\nx\u_1\big|_{t=0} + \e^2\dt\u_0\big|_{t=0},
\end{align}
and the boundary data is given by
\begin{align}\label{d:h}
    \g:=\chi(\e^{-1}\phi)\big(g-\bl_{0,\infty}\big) -\e\uuu_1\big|_{\Gamma_-}
    +\e\big(\vw\cdot\nx\u_0\big)\big|_{\Gamma_-} +\e^2\big(\vw\cdot\nx\u_1\big)\big|_{\Gamma_-}
    + \e^2\dt\bl_{0,\infty}.
\end{align}
Note that with the compatibility condition $u_o|_{\gamma}=g|_{t=0}=C(x_0)$ (see \eqref{compatibility}), we actually have $\uu_0\big|_{t=0}=0$ from the Milne problem and $\uuu_0\big|_{\Gamma_-}=0$ from the initial layer problem.

The source term can be split as
\begin{align}\label{d:ss}
    \ss&:= -\e\dt\ua -\vw\cdot\nx\ua-\e^{-1}\big(\ua-\bua\big)\\
    &\;=:\sss+\ssd+\ssa+\ssb+\ssc \no
\end{align}
with the expressions
\begin{align}
    \sss:=&\,-\e^2\vw\cdot\nx\u_2-\e^2\dt\u_1-\e^3\dt\u_2,\label{d:s0}\\
    \ssd:=&\,-\e\vw\cdot\nx\uuu_1,\label{d:s4}\\
    \ssa:=&\:\bigg(\dfrac{\sin^2\psi}{\rc_1-\e\eta}+\dfrac{\cos^2\psi}{\rc_2-\e\eta}\bigg)\cos\phi\,\dfrac{\p\uu_0}{\p\phi},\label{d:s1}\\
    \ssb:=&\;-\e\dt\uu_0 -\frac{\rc_1\cos\phi\sin\psi}{\pl_1(\rc_1-\e\eta)}\frac{\p\uu_0}{\p\iota_1} -\frac{\rc_2\cos\phi\cos\psi}{\pl_2(\rc_2-\e\eta)}\frac{\p\uu_0}{\p\iota_2}\label{d:s2}\\
    &\;-\frac{\sin\psi}{\rc_1-\e\eta}\bigg\{\frac{\rc_1\cos\phi}{\pl_1\pl_2}\Big[\vt_1\cdot\Big(\vt_2\times\big(\p_{\iota_1\iota_2}\vr\times\vt_2\big)\Big)\Big]
    -\sin\phi\cos\psi\bigg\}\frac{\p\uu_0}{\p\psi}\no\\
    &\;
    +\frac{\cos\psi}{\rc_2-\e\eta}\bigg\{\frac{\rc_2\cos\phi}{\pl_1\pl_2}\Big[\vt_2\cdot\Big(\vt_1\times\big(\p_{\iota_1\iota_2}\vr\times\vt_1\big)\Big)\Big]
    -\sin\phi\sin\psi\bigg\}\frac{\p\uu_0}{\p\psi},\no\\
    \ssc:=&\,-\e^{-1}\Big\{\sin\phi\cdot\e^{\frac{1}{2}}\chi'(\e^{\frac{1}{2}}\eta)\ch(\e^{-1}\phi)\bll_0 
        + \chi(\e^{\frac{1}{2}}\eta)\Big[\overline{\chi(\e^{-1}\phi)\bll_0} -\chi(\e^{-1}\phi)\overline{\bll}_0 \Big]\Big\}.\label{d:s3}
\end{align}

We now give preliminary estimates of the initial, boundary, and source terms specified above.
The proof is straightforward and largely based on Proposition~\ref{prop:wellposedness} and the certain definitions with rescaling and cutoffs (cf. \cite[Section~3.3]{AA020}).

\begin{lemma}\label{lem:source-est}
Assume \eqref{assumption}\eqref{compatibility} hold for $u_o$ and $g$.
For the initial term $\z(\vx,\vw)$ given in \eqref{d:z}, we have
\begin{align}\label{H-est}
    \tnm{\z} \ls \e^2. 
    \end{align}
For the boundary term $\g(t,\vx_0,\vw)$ given in \eqref{d:h}, we have
\begin{align}\label{G-est}
    \tnnms{\g}{\Gamma_-}\ls\e.
    \end{align}
For the source term $\ss(t,\vx,\vw)$ given in \eqref{d:ss}--\eqref{d:s3}, we have
\begin{align}
&\qquad \tnnm{\sss}\ls\e^2, \qquad \tnnm{\ssd} \ls\e^2,\label{S-is-il-est}\\
        &\tnnm{\big(1+\eta\big)\ssa}\ls1,\qquad
        \tnnm{\big(1+\eta\big)\ss^{B\!L}_2}\ls \e^{\frac{1}{2}},\label{S-bl-12-est}\\
        &\tnnm{\big(1+\eta\big)\ss^{B\!L}_3}\ls1,\qquad \nnm{\big(1+\eta\big)\ss^{B\!L}_3}_{L^2_tL^2_xL^1_w}\ls \e^{\frac{1}{2}}.\label{S-bl-3-est}
    \end{align}
Moreover, the boundary layer $\uu_0$ defined in \eqref{boundary layer} satisfies
\begin{align}\label{BL-est}
        \tnnm{\big(1+\eta\big)\uu_0} 
         \ls \e^{\frac{1}{2}}.
    \end{align}
\end{lemma}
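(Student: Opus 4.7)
The plan is to dissect Lemma~\ref{lem:source-est} term by term, using the regularity estimates of Proposition~\ref{prop:wellposedness} together with two elementary changes of variables that are the ultimate sources of the $\e$-gains. Near $\p\Omega$, the substitution $\vx\mapsto(\mn,\iota_1,\iota_2)$ combined with the rescaling $\mn=\e\eta$ yields $\ud\vx\simeq\e\,J\,\ud\eta\,\ud\iota_1\ud\iota_2$ for a bounded Jacobian $J$; for the initial layer, $t=\e^2\tau$ gives $\ud t=\e^2\,\ud\tau$. The exponential weights $\ue^{-\tau}$ and $\ue^{-\beta\eta}$ supplied by \eqref{U^I_0-est}--\eqref{U^B_0-est} dominate any polynomial factor such as $(1+\eta)$ that appears in the claims, so integrability is never in question.

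First I would dispose of the ``easy'' pieces that come with explicit $\e$-prefactors. The initial datum $\z$ and the interior source $\sss$ are controlled in $L^\infty$ by \eqref{U_0-est}--\eqref{U_2-est}, so the $L^2$ bounds with $\e^2$ follow from the finiteness of $|\Omega\times\S^2|$ and $[0,T]$. For $\ssd=-\e\vw\cdot\nx\uuu_1$, the exponential decay $|\nx\uuu_1|\ls\ue^{-\tau}=\ue^{-t/\e^2}$ supplied by \eqref{U^I_1-est}, combined with $\int_0^T\ue^{-2t/\e^2}\,\ud t\simeq\e^2$, delivers $\tnnm{\ssd}\ls\e\cdot\e=\e^2$. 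The boundary datum $\g$ splits into several $\e$-small contributions and the leading cutoff term $\chi(\e^{-1}\phi)(g-\bl_{0,\infty})$; for the latter the boundary measure $\ud\gamma=|\sin\phi|\cos\phi\,\ud\phi\,\ud\psi\,\ud S_\vx$ restricted to $|\phi|\leq 2\e$ gives $\int_0^{2\e}|\sin\phi|\,\ud\phi\simeq\e^2$, which upgrades the $L^\infty$ bound on $g-\bl_{0,\infty}$ to $\tnnms{\g}{\Gamma_-}\ls\e$.

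The core work is the boundary-layer block. The bound $\tnnm{(1+\eta)\uu_0}\ls\e^{1/2}$ is immediate from \eqref{U^B_0-est} together with the rescaling $\ud\vx\simeq\e\,\ud\eta\,\ud S_\vx$ and the fact that $(1+\eta)\ue^{-\beta\eta}\in L^2(\ud\eta)$. The same $\e^{1/2}$ boundary-layer thickness, combined with an $O(1)$ pointwise bound on each coefficient in $\ssb$, yields $\tnnm{(1+\eta)\ssb}\ls\e^{1/2}$; the explicit $\e$ in the $\e\dt\uu_0$ term makes that contribution even smaller. For $\ssa$, differentiating $\ch(\e^{-1}\phi)$ inside $\p_\phi\uu_0$ produces an apparent $\e^{-1}$, but $\chi'(\e^{-1}\phi)$ is supported on $|\phi|\simeq\e$; in the $L^2$ integration the $\e^{-2}$ from squaring is absorbed by one $\e$ from the $\phi$-support and another $\e$ from the boundary-layer rescaling, so the net contribution is $O(1)$. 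For $\ssc$ the first summand is supported at $\eta\simeq\e^{-1/2}$, where $\ue^{-\beta\eta}$ beats any polynomial loss in $\e$; and the second summand, viewed as the commutator $\overline{\chi(\e^{-1}\phi)\bll_0}-\chi(\e^{-1}\phi)\overline{\bll_0}$, has $\phi$-support of measure $O(\e)$. The bound $\tnnm{(1+\eta)\ssc}\ls 1$ follows directly, and the improved $L^2_tL^2_xL^1_w$ estimate of order $\e^{1/2}$ comes from the additional $\e$-gain produced by the small $\phi$-support inside the $L^1_w$ average.

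The main obstacle is the bookkeeping in the last paragraph: the cutoffs $\chi(\e^{1/2}\eta)$ and $\ch(\e^{-1}\phi)$ interact with the geometric coefficients $(\rc_i-\e\eta)^{-1}$ and with the trigonometric factors $\sin\phi,\cos\phi$ in a way that must be tracked carefully so as not to lose $\e$-powers when the cutoffs are differentiated. Once the individual summands appearing in $\ssa,\ssb,\ssc$ have been treated one by one, the proof reduces essentially to the calculation performed in \cite[Section~3.3]{AA020} for the stationary problem, with only two genuinely new ingredients: the time-derivative term $\e\dt\uu_0$ (harmless, since $\dt\uu_0$ inherits the same bound as $\uu_0$) and the initial-layer source $\ssd$ (handled by the exponential decay in $\tau$ and the rescaling $t=\e^2\tau$).
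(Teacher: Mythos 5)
Your proposal follows the same route the paper sketches: Proposition~\ref{prop:wellposedness} for the regularity of each block plus the rescalings $t=\e^2\tau$, $\mn=\e\eta$ and the small $\phi$-support of $\chi(\e^{-1}\phi)$, with the detailed bookkeeping deferred (as the paper also does) to the calculation in \cite[Section~3.3]{AA020}. Your accountings of $\z$, $\sss$, $\ssd$, $\g$, $\ssb$, $\ssc$, and $\uu_0$ are all correct in outline.

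The one substantive omission is in your treatment of $\ssa$. After differentiating the cutoff boundary layer in $\phi$ one gets
\begin{align}
\p_\phi\uu_0=\chi(\e^{\frac12}\eta)\Big[-\e^{-1}\chi'(\e^{-1}\phi)\,\bll_0+\ch(\e^{-1}\phi)\,\p_\phi\bll_0\Big],
\end{align}
and you only address the first summand, arguing that the $\e^{-2}$ from squaring $\e^{-1}\chi'$ is compensated by the $O(\e)$ $\phi$-support and the $O(\e)$ spatial rescaling. That is fine. But the second summand is not harmless either: $\p_\phi\bll_0$ is \emph{not} in $L^\infty$ --- Proposition~\ref{prop:boundary-wellposedness} only places $\bll_0$ in $W^{1,\infty}_\psi L^\infty_{\eta,\phi}$, and the only $\phi$-derivative control available is the weighted bound $\nnm{\ue^{\beta\eta}\sin\phi\,\p_\phi\bll_0}_{L^\infty}\ls 1$. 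One therefore needs to write $|\p_\phi\bll_0|\ls|\sin\phi|^{-1}\ue^{-\beta\eta}$ on the support of $\ch(\e^{-1}\phi)$ (where $|\phi|\gs\e$) and verify that $\int_{\e}^{\pi/2}|\sin\phi|^{-2}\,\ud\phi\simeq\e^{-1}$ precisely matches the $O(\e)$ gain from the normal rescaling, yielding the claimed $\tnnm{(1+\eta)\ssa}\ls 1$. Without invoking this singular-weight estimate, the bound for $\ssa$ does not close; this is arguably the one genuinely nontrivial ingredient in the lemma and should be stated explicitly rather than folded into the reference to \cite{AA020}. (A minor additional remark: the improvement from $L^2_tL^2_xL^2_w$ to $L^2_tL^2_xL^1_w$ for $\ssc$ is only a factor $\e^{1/2}$, not a full $\e$, since $\|\chi(\e^{-1}\phi)\|_{L^1_w}\simeq\e$ while $\|\chi(\e^{-1}\phi)\|_{L^2_w}\simeq\e^{1/2}$; your conclusion is unaffected, but the phrase ``additional $\e$-gain'' overstates it.)
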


In preparation for the a priori estimate of the remainder, we need to write the weak formulation of the remainder equation, which depends essentially on the validity of integration by parts for the transport operator (cf. \cite[Lemma 2.2]{Esposito.Guo.Kim.Marra2013}):

\begin{lemma}[Green's identity]\label{lem:green}
Assume $f(t,\vx,\vw),\ g(t,\vx,\vw)\in L^{\infty}\big([0,T]; L^2(\Omega\times\s^2)\big)$ with
$\dt f+\vw\cdot\nx f,\ \dt g+\vw\cdot\nx g\in L^2([0,T]\times\Omega\times\s^2)$ and $f,\
g\in L^2_{\Gamma}$. Then for almost all $t,s\in[0,T]$, we have
\begin{align}
&\int_s^t\iint_{\Omega\times\s^2}\Big\{\big(\dt f+\vw\cdot\nx f\big)g+\big(\dt g+\vw\cdot\nx
g\big)f\Big\}\\
=&\iint_{\Omega\times\s^2}f(t)g(t)-\iint_{\Omega\times\s^2}f(s)g(s)+\int_s^t\!\int_{\gamma}fg(\vw\cdot n).\no
\end{align}
\end{lemma}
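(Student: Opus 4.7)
The plan is a standard density/mollification argument, routine for Green's identities in kinetic theory but with some care needed because $\Omega$ is merely bounded (not assumed convex).

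I would first establish the identity in the smooth case. If $f, g \in C^1([0,T] \times \overline{\Omega} \times \s^2)$, the Leibniz rule gives the pointwise identity
\begin{align*}
\bigl(\dt f + \vw\cdot\nx f\bigr)g + \bigl(\dt g + \vw\cdot\nx g\bigr)f = \dt(fg) + \vw\cdot\nx(fg).
\end{align*}
Integrating over $[s,t] \times \Omega \times \s^2$, the $\dt$ piece telescopes into $\iint_{\Omega\times\s^2}(fg)(t) - \iint_{\Omega\times\s^2}(fg)(s)$. For the second piece, I would fix $\vw$ as a parameter and apply the classical divergence theorem in $\vx$: since the outward unit normal to $\p\Omega$ is $n$, this converts $\int_{\Omega} \vw\cdot\nx(fg)\,\ud\vx$ into $\int_{\p\Omega}(fg)(\vw\cdot n)\,\ud S_x$, and then Fubini in $(\vw,\vx)$ yields the claimed boundary contribution $\int_s^t\!\int_\gamma fg\,(\vw\cdot n)\,\ud S_x\ud w\ud t'$.

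For general $f, g$ satisfying the hypotheses, I would construct smooth approximants via a finite atlas of local charts that straighten $\p\Omega$, extend $f$ and $g$ across the boundary (e.g.\ by reflection in the local normal coordinate), and mollify in $(t,\vx)$ to obtain $f_\delta, g_\delta \in C^1$. The DiPerna--Lions commutator lemma ensures $(\dt + \vw\cdot\nx)f_\delta \to \dt f + \vw\cdot\nx f$ in $L^2([0,T]\times\Omega\times\s^2)$ (analogously for $g_\delta$), while the $L^2_{\Gamma}$ hypothesis combined with this streaming-derivative control gives $f_\delta|_\gamma \to f|_\gamma$ in $L^2_\Gamma$. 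Applying the smooth identity to $(f_\delta, g_\delta)$ and passing to the limit $\delta \to 0^+$ gives the equality at Lebesgue points of $t \mapsto \iint_{\Omega\times\s^2}(fg)(t)$, hence for almost every $s, t \in [0,T]$.

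The main technical obstacle is the simultaneous convergence of the streaming derivatives and the boundary traces, which is especially delicate for non-convex $\Omega$ where characteristics can be tangent to $\p\Omega$ at the grazing set $\gamma_0$. Fortunately, the weight $\abs{\vw\cdot n}$ in $\ud\gamma$ automatically suppresses contributions from a neighborhood of $\gamma_0$, so the reflection/mollification scheme need only be accurate away from $\gamma_0$; the resulting argument is by now standard and follows the scheme of Esposito--Guo--Kim--Marra (Lemma 2.2, 2013), which I would reference rather than reproduce in full.
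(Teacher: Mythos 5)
The paper presents this lemma without any proof of its own, merely citing \cite[Lemma 2.2]{Esposito.Guo.Kim.Marra2013}, which is exactly the reference you point to at the end. Your sketch (Leibniz plus divergence theorem in the smooth case, then density via boundary-straightening extension and mollification with DiPerna--Lions commutator control of the streaming derivative and stability of $L^2_\Gamma$ traces) correctly describes how that cited result is proved, so your proposal matches the paper's approach.
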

 
With this result, the weak formulation of \eqref{remainder} takes the following form:
for any test function $\weak(t,x,w)\in L^{\infty}\big([0,T]; L^2(\Omega\times\s^2)\big)$ with $\dt\weak+\vw\cdot\nx\weak\in L^2([0,T]\times\Omega\times\s^2)$ and $\weak\in L^2_{\Gamma}$, 
it holds for any $t\in[0,T]$ that
\begin{align}\label{weak formulation}
    &\;\e\bbrb{\re(t),\weak(t)}{x,w}-\e\bbrb{\z,\weak(0)}{x,w}
    +\br{\re, \weak}_{\Gamma_+} - \br{\g, \weak}_{\Gamma_-} \\
    =&\; \e\dbbr{\re,\dt\weak}+\dbbr{\re, w\cdot\nx \weak}-\e^{-1}\dbbr{\re-\bre, \weak} +\dbr{\ss, \weak}.\no
\end{align}
Here, the time integral is taken over $[0,t]$.

\bigskip
\section{Remainder Estimate}\label{Sec:remainder-est}

Our goal of this section is to prove the remainder estimate for 
\begin{align}
    \re:= u^\e - \Big[\left(\u_0+\e\u_1+\e^2\u_2\right)+\left(\uuu_0+\e\uuu_1\right)+\uu_0\Big].
\end{align}

\begin{theorem}[Remainder estimate]\label{thm:remainder-est}
    Under the assumption \eqref{assumption} with \eqref{compatibility}, we have
    \begin{align}
    \nnm{\re}_{L^{\infty}_tL^2_{x,w}}+\e^{-\frac{1}{2}}\tnnms{\re}{\Gamma_+}+\e^{-\frac{1}{2}}\tnnm{\bre}+\e^{-1}\tnnm{\ire}\ls 1,
    \end{align}
where the implicit constant is independent of $\e$.
\end{theorem}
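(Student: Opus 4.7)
The plan is to couple a classical $L^2$ energy identity with the three-test-function kernel estimate outlined in the introduction, thereby upgrading the sub-optimal bound \eqref{intro 04} to the sharper one stated here. All estimates rely on the weak formulation \eqref{weak formulation}, Green's identity (Lemma~\ref{lem:green}), and the source/data bounds of Lemma~\ref{lem:source-est}. Throughout I will freely use the orthogonality $\br{\ire,\bre}_w=0$ together with the elementary identities $\int_{\s^2}w\,\ud w=0$ and $\frac{1}{4\pi}\int_{\s^2}w_iw_j\,\ud w=\frac{1}{3}\delta_{ij}$.

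\textbf{Step 1: Energy estimate.} Taking $\weak=\re$ in \eqref{weak formulation} and using $\br{\ire,\bre}_w=0$ produces
\begin{align*}
\tfrac{\e}{2}\tnm{\re(t)}^2+\tfrac{1}{2}\tnnms{\re}{\Gamma_+}^2+\e^{-1}\tnnm{\ire}^2\leq\tfrac{\e}{2}\tnm{\z}^2+\tfrac{1}{2}\tnnms{\g}{\Gamma_-}^2+\babs{\dbr{\ss,\re}}.
\end{align*}
Lemma~\ref{lem:source-est} controls the data terms. For the source, split $\re=\bre+\ire$ and treat each piece of $\ss$ separately: the interior parts $\sss$ and $\ssd$ are $O(\e^2)$ in $L^2_{t,x,w}$ by \eqref{S-is-il-est} and absorbed trivially; the boundary-layer parts $\ssa,\ssb,\ssc$ only admit the weighted bounds \eqref{S-bl-12-est}--\eqref{S-bl-3-est}, so I trade the $(1+\eta)$ weight against the concentration of these sources near $\p\Omega$, yielding contributions $\oo\tnnm{\bre}^2+O(1)$ via Cauchy--Schwarz. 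Pairings with $\ire$ are absorbed into $\e^{-1}\tnnm{\ire}^2$. The outcome is
\begin{align*}
\tnm{\re(t)}^2+\e^{-1}\tnnms{\re}{\Gamma_+}^2+\e^{-2}\tnnm{\ire}^2\ls\oo\,\tnnm{\bre}^2+1.
\end{align*}

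\textbf{Step 2: Kernel estimate.} Introduce $\test(t,\vx)$ solving the Poisson problem $-\dx\test=\bre$ in $\Omega$ with $\test|_{\p\Omega}=0$; elliptic regularity yields $\btnm{\test(t)}_{H^2_x}+\btnm{\dt\test(t)}_{H^2_x}\ls\btnm{\bre(t)}+\btnm{\dt\bre(t)}$. Successively testing \eqref{weak formulation} with $\weak=\e^{-1}\test$, $\weak=w\cdot\nx\test$, and $\weak=\e\,\dt\test$ produces the three identities \eqref{kk 01}--\eqref{kk 03}. Test~(i) contributes the positive term $\bbr{\bre(t),\test(t)}_x=\btnm{\nx\test(t)}^2$, a cross term $-\e^{-1}\dbbr{\ire,w\cdot\nx\test}$, and the source $\e^{-1}\dbr{\ss,\test}$, in which the crucial $\e^{\frac{1}{2}}$ gain is harvested by trading the weight $(1+\eta)$ from Lemma~\ref{lem:source-est} against Hardy's inequality $\btnm{\test/d_{\p\Omega}}\ls\btnm{\nx\test}\ls\btnm{\bre}$. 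Test~(ii), after integration by parts in $x$, generates the coercive quantity $-\dbbr{\re,\vw\cdot\nx(w\cdot\nx\test)}$, which via the second-moment identity reduces modulo $\ire$ errors to $\tfrac{1}{3}\dbbr{\bre,-\dx\test}=\tfrac{1}{3}\tnnm{\bre}^2$; the same test also yields an $+\e^{-1}\dbbr{\ire,w\cdot\nx\test}$ that \emph{exactly cancels} the analogous term from Test~(i) --- this cancellation is the heart of the argument, removing what would otherwise be a fatal $\e^{-1}\tnnm{\ire}$ loss. Test~(iii) supplies $\e^2\btnnm{\nx\dt\test}^2=\e^2\dbbr{\dt\bre,\dt\test}$, exactly the $L^2$ control of $\nx\dt\test$ needed to close the time-derivative contribution $\e\dbbr{\dt\re,w\cdot\nx\test}\sim-\e\dbbr{\re,w\cdot\nx\dt\test}$ that arises from Test~(ii) after integration by parts in $t$. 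Boundary traces simplify because $\test|_{\p\Omega}=0$: Tests~(i) and (iii) have vanishing boundary contributions, while for Test~(ii) the identity $w\cdot\nx\test=(w\cdot n)(n\cdot\nx\test)$ on $\p\Omega$ leaves only terms controlled by $\tnnms{\re}{\Gamma_+}$, $\tnnms{\g}{\Gamma_-}$ and the trace of $\nx\test$. After careful bookkeeping, one obtains \eqref{intro 06}:
\begin{align*}
\tnnm{\bre}^2\ls\tnnm{\ire}^2+\tnnms{\re}{\Gamma_+}^2+\e\,\tnm{\re(t)}^2+\e.
\end{align*}

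\textbf{Step 3: Combination.} Multiplying the kernel estimate by a small constant and adding it to the energy bound, the $\oo\tnnm{\bre}^2$ on the right of Step~1 is absorbed by the newly available $\tnnm{\bre}^2$ on the left, while the $\e\,\tnm{\re(t)}^2$ on the right of the kernel bound is absorbed by $\tnm{\re(t)}^2$ on the left for small $\e$; a direct rearrangement then yields
\begin{align*}
\tnm{\re(t)}^2+\e^{-1}\tnnms{\re}{\Gamma_+}^2+\e^{-2}\tnnm{\ire}^2+\e^{-1}\tnnm{\bre}^2\ls 1,
\end{align*}
uniformly in $t\in[0,T]$, which is precisely the claimed estimate. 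The principal obstacles are: (a)~producing the $\e^{\frac{1}{2}}$ gain in $\e^{-1}\dbr{\ss,\test}$, which requires Hardy's inequality paired with the precise $(1+\eta)$-weighted source bounds and the $L^2_tL^2_xL^1_w$ refinement in \eqref{S-bl-3-est}; (b)~the coupling of Tests~(ii) and (iii) to close the time-derivative terms through $\nx\dt\test$; and (c)~verifying that every boundary trace produced by Green's identity is controllable without $\e$-loss, which ultimately relies on $\test|_{\p\Omega}=0$ and the elliptic regularity of $\test$.
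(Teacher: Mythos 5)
Your proposal follows the paper's own two-lemma route exactly: an energy estimate from $\weak=\e^{-1}\re$ in \eqref{weak formulation}, plus a kernel estimate built from the three test functions $\test$, $w\cdot\nx\test$, and $\e\dt\test$ constructed from the Dirichlet Poisson problem, with the same $\e^{-1}\dbbr{\ire,w\cdot\nx\test}$ cancellation, the same role for Test~(iii) in controlling $\nx\dt\test$, and the same Hardy-type extraction of an $\e^{\frac{1}{2}}$ gain from the weighted source bounds. So the approach is essentially identical to the paper's Lemmas~\ref{lem:energy} and~\ref{lem:kernel}.

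One substantive slip: the energy estimate you display,
\begin{align*}
\tnm{\re(t)}^2+\e^{-1}\tnnms{\re}{\Gamma_+}^2+\e^{-2}\tnnm{\ire}^2\ls\oo\,\tnnm{\bre}^2+1,
\end{align*}
is stronger than what the computation you describe can produce. The dominant boundary-layer pairings scale as $\e^{-1}\babs{\dbbr{\ssa+\ssb+\ssc,\bre}}\ls\e^{-\frac{1}{2}}\tnnm{\bre}$ (after the $\phi$-integration by parts and the $L^2_tL^2_xL^1_w$ refinement), and Young's inequality splits this as $\oo\e^{-1}\tnnm{\bre}^2+\oo^{-1}$, not $\oo\tnnm{\bre}^2+O(1)$; the $\e^{-1}$ factor here is unavoidable. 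The paper's Lemma~\ref{lem:energy} carries the correct $\oo\e^{-1}\tnnm{\bre}^2$. This does not sink Step~3 --- the kernel estimate still delivers $\e^{-1}\tnnm{\bre}^2\ls 1$ after absorption --- but the intermediate inequality as you stated it is not reachable by the argument you give. A minor further point: your claim that elliptic regularity yields $\btnm{\dt\test(t)}_{H^2_x}\ls\btnm{\dt\bre(t)}$ is formally true but unusable, since $\dt\bre$ is uncontrolled; as you then rightly note, the only way to control $\nx\dt\test$ is through Test~(iii), so that phrase should be dropped to avoid confusion.
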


The combination of the following two lemmas, which will be presented in the rest of this section, will lead immediately to the desired estimate above.

\begin{lemma}[Energy estimate]\label{lem:energy}
    Under the assumption \eqref{assumption} with \eqref{compatibility}, we have
    \begin{align}\label{energy}
    \nnm{\re}_{L^{\infty}_tL^2_{x,w}}^2+\e^{-1}\tnnms{\re}{\Gamma_+}^2+\e^{-2}\tnnm{\ire}^2\ls \oo\e^{-1}\tnnm{\bre}^2+\oo^{-1}.
    \end{align}
Here $0<\oo\ll 1$ is a constant that can be taken sufficiently small.
\end{lemma}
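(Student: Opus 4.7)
The plan is to apply the weak formulation \eqref{weak formulation} of \eqref{remainder} with the test function $\weak=\re$ and invoke Green's identity (Lemma~\ref{lem:green}). Using the orthogonality $\iint_{\Omega\times\s^2}\bre(\re-\bre)\,\ud\vw\ud\vx=0$, the dissipation term reduces to $\dbr{\re-\bre,\re}=\tnnm{\ire}^2$, yielding the basic energy identity
\begin{align*}
\tfrac{\e}{2}\tnm{\re(t)}^2+\tfrac{1}{2}\tnnms{\re}{\Gamma_+}^2+\e^{-1}\tnnm{\ire}^2=\tfrac{\e}{2}\tnm{\z}^2+\tfrac{1}{2}\tnnms{\g}{\Gamma_-}^2+\dbr{\ss,\re}.
\end{align*}
By Lemma~\ref{lem:source-est} we have $\e\tnm{\z}^2\ls\e^5$ and $\tnnms{\g}{\Gamma_-}^2\ls\e^2$, so the data contribute at most $C\e$, which becomes a bounded constant after the division by $\e$ at the end.

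Next I bound $\dbr{\ss,\re}$ by splitting $\ss=\sss+\ssd+\ssa+\ssb+\ssc$ and $\re=\ire+\bre$. For the pieces that are small in $L^2$, namely $\sss,\ssd$ (both $O(\e^2)$) and $\ssb$ ($O(\e^{1/2})$), I combine Cauchy--Schwarz with a weighted Young's inequality, using a weight at scale $\e^{-1}$ on the $\ire$-pairing and at scale $1$ on the $\bre$-pairing, which produces contributions of the form $\oo\e^{-1}\tnnm{\ire}^2+\oo\tnnm{\bre}^2+C_\oo\e$. The same weighted Young's estimate handles $\ssc$ when paired with $\ire$; when paired with the $w$-independent $\bre$ I use the sharper mixed-norm bound $\nnm{(1+\eta)\ssc}_{L^2_tL^2_xL^1_w}\ls\e^{1/2}$ to get
\begin{align*}
|\dbr{\ssc,\bre}|\le\nnm{\ssc}_{L^2_tL^2_xL^1_w}\nm{\bre}_{L^2_{t,x}}\ls\e^{1/2}\tnnm{\bre}\le\oo\tnnm{\bre}^2+C_\oo\e.
\end{align*}

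The hard part will be $\dbr{\ssa,\bre}$, since the only available bound $\tnnm{(1+\eta)\ssa}\ls 1$ forces $\tnnm{\ssa}\ls 1$, and a direct Cauchy--Schwarz would leave an $\e$-independent remainder that blows up after dividing by $\e$. My plan is to first integrate $\ssa$ in $w$ against the $w$-independent factor $\bre$ and then integrate by parts in $\phi$ inside this velocity integral; with $\ud\vw=\cos\phi\,\ud\phi\,\ud\psi$ and $\cos(\pm\pi/2)=0$ the boundary terms drop and one finds
\begin{align*}
\int_{\s^2}\ssa\,\ud\vw=2\!\int_0^{2\pi}\!\Big(\tfrac{\sin^2\psi}{\rc_1-\e\eta}+\tfrac{\cos^2\psi}{\rc_2-\e\eta}\Big)\ud\psi\int_{-\pi/2}^{\pi/2}\!\cos\phi\sin\phi\,\uu_0\,\ud\phi.
\end{align*}
Combining this with the exponential decay of $\uu_0$ in $\eta$ from \eqref{U^B_0-est} and the rescaling $\eta=\e^{-1}\mn$ gives $\nm{\int_{\s^2}\ssa\,\ud\vw}_{L^2_{t,x}}\ls\e^{1/2}$ and hence $|\dbr{\ssa,\bre}|\ls\e^{1/2}\tnnm{\bre}\le\oo\tnnm{\bre}^2+C_\oo\e$, as needed.

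Assembling the bounds produces $|\dbr{\ss,\re}|\le\oo\e^{-1}\tnnm{\ire}^2+\oo\tnnm{\bre}^2+C_\oo\e$. Substituting into the energy identity, absorbing the $\oo\e^{-1}\tnnm{\ire}^2$ term on the LHS for $\oo$ small, and then dividing by $\e$ yields the stated estimate; taking $\sup_{t\in[0,T]}$ on the LHS (since the RHS is monotone in $t$) converts $\tnm{\re(t)}^2$ into $\nnm{\re}_{L^{\infty}_tL^2_{x,w}}^2$. The central obstacle is precisely the velocity-integration-plus-IBP trick for $\dbr{\ssa,\bre}$ described above, which converts an apparent loss into the crucial $\e^{1/2}$ gain by exploiting that the $w$-average $\int_{\s^2}\ssa\,\ud\vw$ inherits an extra $\sin\phi$ factor together with the exponential decay of $\bll_0$ in the rescaled normal variable $\eta$.
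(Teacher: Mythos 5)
Your proof is correct and follows essentially the same route as the paper: you test the weak formulation against $\re$ (the paper tests against $\e^{-1}\re$, which differs only by a division by $\e$ at the end), isolate the dissipation $\e^{-1}\tnnm{\ire}^2$ via orthogonality, split the source using Lemma~\ref{lem:source-est}, and — crucially — recognize that $\dbbr{\ssa,\bre}$ requires integration by parts in $\phi$ against the Jacobian $\cos\phi$ so that the $\e^{1/2}$ gain from the exponential decay of $\uu_0$ in $\eta=\e^{-1}\mu$ can be accessed through an $L^1_w$--type pairing. Two small slips worth flagging: (i) in your formula for $\int_{\s^2}\ssa\,\ud\vw$ the $\psi$ and $\phi$ integrals do not actually factor, since $\uu_0$ depends on both angles; the correct post--integration-by-parts expression is
\begin{align*}
\int_{\s^2}\ssa\,\ud\vw = 2\int_0^{2\pi}\!\!\int_{-\pi/2}^{\pi/2}\Big(\tfrac{\sin^2\psi}{\rc_1-\e\eta}+\tfrac{\cos^2\psi}{\rc_2-\e\eta}\Big)\cos\phi\sin\phi\,\uu_0\,\ud\phi\,\ud\psi,
\end{align*}
so you must take $L^1_w$ of $\uu_0$ inside the double integral, as in \eqref{BL-est}, rather than treat the two angular integrals independently; and (ii) you never explicitly dispose of $\dbbr{\ssa,\ire}$ — it is handled by the same weighted Young you use for $\dbbr{\ssc,\ire}$, since $\tnnm{\ssa}\ls1$ and the resulting $\oo\e^{-1}\tnnm{\ire}^2$ is absorbed into $\e^{-2}\tnnm{\ire}^2$ after dividing by $\e$. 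Neither affects the validity of the argument.
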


\begin{proof}
Taking $\weak=\e^{-1}\re$ in the weak formulation \eqref{weak formulation} and using the fundamental theorem of calculus and divergence theorem, we obtain
\begin{align}
    \tfrac{1}{2}\tnm{\re(t)}^2-\tfrac{1}{2}\tnm{\z}^2+\tfrac{1}{2}\e^{-1}\tnnms{\re}{\Gamma_+}^2 - \tfrac{1}{2}\e^{-1}\tnnms{\g}{\Gamma_-}^2 
    =-\e^{-2}\dbr{\ire,\re} +\e^{-1}\dbr{\ss, \re}. 
\end{align}
Observing the orthogonality $\br{\bre,\ire}_{w}=0$, we then rearrange the terms to arrive at 
\begin{align}\label{Feimeng}
    \tfrac{1}{2}\tnm{\re(t)}^2+\tfrac{1}{2}\e^{-1}\tnnms{\re}{\Gamma_+}^2+\e^{-2}\tnnm{\ire}^2
    =\tfrac{1}{2}\tnm{\z}^2+\tfrac{1}{2}\e^{-1}\tnnms{\g}{\Gamma_-}^2 +\e^{-1}\dbr{\ss,\re}.
\end{align}

By \eqref{H-est} and \eqref{G-est}, we know 
\begin{align}\label{energy 01}
    \tnm{\z}^2+\e^{-1}\tnnms{\g}{\Gamma_-}^2 \ls \e.
\end{align}
Now we split the last term in \eqref{Feimeng} and estimate each part separately.
Using Young's inequality and by \eqref{S-is-il-est}, we have
\begin{align}\label{energy 02}
    \e^{-1}\abs{\dbbr{\sss+\ssd,\re}}\ls \oo\tnnm{\re}^2+\oo^{-1}\e^{-2}\tnnm{\sss+\ssd}^2\ls \oo\tnnm{\re}^2+\oo^{-1}\e^2.
\end{align}
Likewise, by \eqref{S-bl-12-est} and \eqref{S-bl-3-est}, we have 
\begin{align}\label{energy 03}
    \e^{-1}\abs{\dbbr{\ssa+\ssb+\ssc,\ire}}
    &\ls\oo\e^{-2}\tnnm{\ire}^2+\oo^{-1}\tnnm{\ssa+\ssb+\ssc}^2\\
    &\ls \oo\e^{-2}\tnnm{\ire}^2+\oo^{-1}.\no
\end{align} 
For $\e^{-1}\dbbr{\ssa,\bre}$, we use integration by parts in $\phi$ (note that the velocity integral contains a Jacobian $\cos\phi$) followed by \eqref{BL-est} and get
\begin{align}\label{energy 04}
    \e^{-1}\abs{\dbbr{\ssa,\bre }}&=\e^{-1}\abs{\dbr{ \Big(\tfrac{\sin^2\psi}{R_1-\e\eta}+\tfrac{\cos^2\psi}{R_2-\e\eta}\Big)\cos\phi\,\p_{\phi}\uu_0,\bre}}\\
    &=\e^{-1}\abs{\dbr{ \Big(\tfrac{\sin^2\psi}{R_1-\e\eta}+\tfrac{\cos^2\psi}{R_2-\e\eta}\Big)2\sin\phi\,\uu_0,\bre}}\no\\
    &\ls\e^{-1}\nnm{\uu_0}_{L^2_tL^2_xL^1_w}\tnnm{\bre}\ls\e^{-\frac{1}{2}}\tnnm{\bre}\ls \oo\e^{-1}\tnnm{\bre}^2+\oo^{-1}.\no
\end{align}
Turning to the remaining terms, we use \eqref{S-bl-12-est}\eqref{S-bl-3-est} to estimate
\begin{align}\label{energy 05}
    \e^{-1}\abs{\dbbr{\ssb+\ssc,\bre}}
    &\ls\e^{-1}\tnnm{\bre}\Big(\nnm{\ssb}_{L^2_tL^2_xL^1_w}+\nnm{\ssc}_{L^2_tL^2_xL^1_w}\Big)\\
    &\ls\e^{-\frac{1}{2}}\tnnm{\bre}\ls \oo\e^{-1}\tnnm{\bre}^2+\oo^{-1}.\no
\end{align}

Collecting \eqref{energy 02} through \eqref{energy 05}, we deduce that
\begin{align}\label{energy 06}
    \e^{-1}\abs{\dbr{\ss, \re}}\ls \oo\e^{-2}\tnnm{\ire}^2+\oo\e^{-1}\tnnm{\bre}^2+\oo^{-1}.
\end{align}
Finally, combining \eqref{Feimeng} with \eqref{energy 01} and \eqref{energy 06} settles \eqref{energy}.
\end{proof}

\begin{lemma}[Kernel estimate]\label{lem:kernel}
    Under the assumption \eqref{assumption} with \eqref{compatibility}, we have
    \begin{align}\label{kernel}
    \tnnm{\bre}^2\ls\e\nnm{\re}_{L^{\infty}_tL^2_{x,w}}^2+\tnnm{\ire}^2 +\tnnms{\re}{\Gamma_+}^2+\e.
    \end{align}
\end{lemma}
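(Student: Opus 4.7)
The plan is to introduce an auxiliary potential $\test(t,x)$ by solving the Dirichlet Poisson problem
\[
-\Delta_x\test=\bre\ \ \text{in}\ \ \Omega,\qquad \test|_{\p\Omega}=0
\]
at each fixed $t\in[0,T]$. Standard elliptic regularity then gives $\nnm{\test}_{L^2_tH^2_x}\ls\tnnm{\bre}$ and $\tnm{\nx\test(t)}\ls\tnm{\bre(t)}$, while Hardy's inequality combined with the rescaled normal variable $\eta=\e^{-1}\mn$ yields the crucial weighted estimate $\nnm{\test/(1+\eta)}_{L^2_{t,x}}\ls\e\,\nnm{\nx\test}_{L^2_{t,x}}\ls\e\tnnm{\bre}$, since $(1+\eta)^{-1}=\e/(\e+\mn)\ls\e/\mn$ near $\p\Omega$ and Hardy controls $\nm{\test/\mn}_{L^2_x}$ by $\nm{\nx\test}_{L^2_x}$. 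Analogous bounds hold with $\dt\test$ in place of $\test$.

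\textbf{Three identities.} Multiplying the remainder equation \eqref{remainder} by $\weak_1=\e^{-1}\test$, $\weak_2=w\cdot\nx\test$, and $\weak_3=\e\dt\test$ respectively, integrating over $[0,t]\times\Omega\times\s^2$, and performing integration by parts in $x$ (whose $\p\Omega$-boundary terms vanish because $\test|_{\p\Omega}=\dt\test|_{\p\Omega}=0$), one recovers the three identities \eqref{kk 01}, \eqref{kk 02}, \eqref{kk 03} already displayed in the introduction. In \eqref{kk 01} the collision term $\e^{-2}\dbr{\ire,\test}$ vanishes by $w$-orthogonality of $\ire$ against the $w$-independent $\test$, and the transport term collapses via $\int_{\s^2}w\,dw=0$ to $-\e^{-1}\dbr{\ire,w\cdot\nx\test}$. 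In \eqref{kk 02} the bulk $-\dbr{\re,w\cdot\nx(w\cdot\nx\test)}$ splits into a coercive $\bre$-piece giving $+\tfrac{1}{3}\tnnm{\bre}^2$ (via $\int_{\s^2}w_iw_j\,dw=\tfrac{4\pi}{3}\delta_{ij}$ together with $-\Delta_x\test=\bre$) plus an $\ire$-piece controlled by $\tnnm{\ire}\tnnm{\bre}$ through the elliptic estimate on $D^2_x\test$. In \eqref{kk 03} the collision term again vanishes and the time derivative becomes $4\pi\e^2\nnm{\nx\dt\test}_{L^2_{t,x}}^2$ after $-\Delta_x\dt\test=\dt\bre$ and integration by parts in $x$.

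\textbf{Combination.} Adding \eqref{kk 01}+\eqref{kk 02} exactly cancels the singular couplings $\pm\e^{-1}\dbr{\ire,w\cdot\nx\test}$ and leaves on the left the coercive combination $2\pi\btnm{\nx\test(t)}^2+\tfrac{1}{3}\tnnm{\bre}^2$ (up to negligible time-zero pieces bounded using $\tnm{\z}\ls\e^2$). The awkward transport time-derivative $\e\dbr{\dt\re,w\cdot\nx\test}$ is handled by integration by parts in $t$: the resulting time endpoints retain only $\ire$-pieces (the $\bre$-pieces die by $\int w\,dw=0$) and are bounded by $\e\nnm{\re}_{L^\infty_tL^2_{x,w}}\tnm{\bre(t)}$, yielding the $\e\nnm{\re}^2_{L^\infty_tL^2_{x,w}}$ term in \eqref{kernel}; the remaining bulk $-\e\dbr{\ire,w\cdot\nx\dt\test}$ is controlled by $\e\tnnm{\ire}\nnm{\nx\dt\test}_{L^2_{t,x}}$ and, by Young combined with \eqref{kk 03}, reduces to $\tnnm{\ire}^2+|\e\dbr{\ss,\dt\test}|$. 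Boundary traces on $\Gamma_\pm$ are estimated via the trace bound $\nnm{w\cdot\nx\test}_{L^2(\Gamma_\pm)}\ls\tnnm{\bre}$ and $\tnnms{\g}{\Gamma_-}\ls\e$, giving $(\tnnms{\re}{\Gamma_+}+\e)\tnnm{\bre}\ls\oo\tnnm{\bre}^2+\oo^{-1}(\tnnms{\re}{\Gamma_+}^2+\e^2)$.

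\textbf{Main obstacle.} The heart of the analysis is controlling the source contributions $\e^{-1}\dbr{\ss,\test}$, $\dbr{\ss,w\cdot\nx\test}$, and $\e\dbr{\ss,\dt\test}$. The interior and initial-layer pieces $\sss,\ssd$ contribute $O(\e)$ trivially via Lemma~\ref{lem:source-est}. For the boundary-layer source $\ssa$, a direct use of $\tnnm{(1+\eta)\ssa}\ls 1$ would yield only $\e^{-1}|\dbr{\ssa,\test}|\ls\tnnm{\bre}$, producing the uninformative $\oo^{-1}$ on the right of \eqref{kernel} instead of the required $\e$. The necessary $\e^{1/2}$ gain is obtained by first integrating by parts in the velocity angle $\phi$ against the spherical Jacobian $\cos\phi$ (as in \eqref{energy 04}), converting $\p_\phi\uu_0$ into $2\sin\phi\,\uu_0$; the cutoff structure $\uu_0=\chi(\e^{1/2}\eta)\ch(\e^{-1}\phi)\bll_0$ then yields $\nnm{(1+\eta)\uu_0}_{L^2_tL^2_xL^1_w}\ls\e^{1/2}$, and pairing with the Hardy bound $\nnm{\test/(1+\eta)}_{L^2_{t,x}}\ls\e\tnnm{\bre}$ produces a contribution $\ls\e^{1/2}\tnnm{\bre}\ls\oo\tnnm{\bre}^2+\oo^{-1}\e$. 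Analogous treatments for $\ssb,\ssc$ and for the pairings with $w\cdot\nx\test$ and $\e\dt\test$ close the argument; all $\oo\tnnm{\bre}^2$ terms are finally absorbed into the coercive left-hand side to produce \eqref{kernel}.
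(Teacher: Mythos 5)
Your proposal follows the paper's own proof essentially step by step: the same auxiliary Dirichlet potential $\test=(-\Delta_x)^{-1}\bre$, the same three test functions $\e^{-1}\test$, $w\cdot\nx\test$, $\e\dt\test$ with the same cancellation of $\e^{-1}\dbr{\ire,w\cdot\nx\test}$, the same coercivity from $-\dbr{\bre,w\cdot\nx(w\cdot\nx\test)}$, and the same $\e^{1/2}$ gain from the $\phi$-integration by parts and Hardy paired against the cutoff decay of $\uu_0$. The only stylistic difference is that you phrase Hardy as a weighted bound $\nnm{\test/(1+\eta)}\ls\e\tnnm{\bre}$ while the paper averages $\frac{1}{\mu}\int_0^\mu\p_\mu\test$; these are equivalent, so the argument is correct and matches the paper.
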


\begin{proof}
We will use an auxiliary Dirichlet problem for the Poisson equation to construct our test functions.
Let $\test(t,x):=(-\Delta_x)^{-1}\bre(t,x)$ be the solution to (for fixed $t$)
\begin{align}
\left\{
\begin{array}{l}
-\Delta_x\test=\bre\ \ \text{ in}\
\ \Omega,\\\rule{0ex}{1.2em}
\test=0\ \ \quad\text{ on}\ \
\p\Omega.
\end{array}
\right.
\end{align}
Based on the standard elliptic estimate and the trace theorem, for every $t\in\rp$ we have
\begin{align}\label{kernel 01}
    \nm{\test(t)}_{H^2(\Omega)}+\babs{\nx\test(t)}_{H^{\frac{1}{2}}(\p\Omega)}\ls\nm{\bre(t)}_{L^2(\Omega)}.
\end{align}
We first test \eqref{remainder} with $\weak=\test$ (or equivalently, take $\weak=\test$ in \eqref{weak formulation}) to get
\begin{align}
    \e\dbbr{\dt\re,\test} +\br{\re, \test}_{\Gamma_+} - \br{\g, \test}_{\Gamma_-}
    -\dbbr{\re,\vw\cdot\nx\test}+\e^{-1}\dbbr{\ire,\test}=\dbr{\ss,\test}.
\end{align}
Notice that $\br{\ire,\test}_{w}=0$ and $\br{\dt\re- \dt\bre,\test}_{w}=0$ by orthogonality, $\br{\bre,\vw\cdot\nx\test}_{w}=0$ by symmetry, and that the boundary terms vanish due to $\test|_{\p\Omega}=0$. Hence, we are left with 
\begin{align}\label{kernel 02}
    \e\dbbr{\dt\bre,\test}-\dbbr{\ire,\vw\cdot\nx\test}=\dbr{\ss,\test}.
\end{align}
We then test \eqref{remainder} with $\weak=w\cdot\nx\test$ to get 
\begin{align}\label{kernel 03}
    \e\dbbr{\dt\re,w\cdot\nx\test}+\bbr{\re, w\cdot\nx\test}_{\Gamma_+} - \bbr{\g, w\cdot\nx\test}_{\Gamma_-}\\
    -\dbbr{\re,\vw\cdot\nx\big(w\cdot\nx\test\big)}+\e^{-1}\dbbr{\ire,w\cdot\nx\test}
    &=\dbbr{\ss,w\cdot\nx\test}.\no
\end{align}
Adding $\e^{-1}\times$\eqref{kernel 02} and \eqref{kernel 03} to eliminate $\e^{-1}\dbbr{\ire,w\cdot\nx\test}$, we obtain
\begin{align}\label{kernel 04}
    \dbbr{\dt\bre,\test}+\e\dbbr{\dt\re,w\cdot\nx\test}+\bbr{\re, w\cdot\nx\test}_{\Gamma_+} - \bbr{\g, w\cdot\nx\test}_{\Gamma_-}\\
    -\dbbr{\re,\vw\cdot\nx\big(w\cdot\nx\test\big)}&=\e^{-1}\dbr{\ss,\test}+\dbbr{\ss,w\cdot\nx\test}.\no
\end{align}
On the one hand, we may split
\begin{align}
    -\dbbr{\re,\vw\cdot\nx\big(w\cdot\nx\test\big)}=
    -\dbbr{\bre,\vw\cdot\nx\big(w\cdot\nx\test\big)}
    -\dbbr{\ire,\vw\cdot\nx\big(w\cdot\nx\test\big)},
\end{align}
where the kernel part has the key positivity:
\begin{align}
    -\dbbr{\bre,\vw\cdot\nx\big(w\cdot\nx\test\big)}\simeq-\dbbr{\bre,\dx\test}=\tnnm{\bre}^2,
\end{align}
while the non-kernel part can be bounded as
\begin{align}
    \abs{\dbbr{\ire,\vw\cdot\nx\big(w\cdot\nx\test\big)}}\ls\tnnm{\ire}\nnm{\test}_{L^2_tH^2_x}\ls\oo\tnnm{\bre}^2+\oo^{-1}\tnnm{\ire}^2.
\end{align}
On the other hand, integration by parts reveals that
\begin{align}
    \dbbr{\dt\bre,\test}=-\dbbr{\dt\dx\test,\test}=\dbbr{\dt\nx\test,\nx\test}=\tfrac{1}{2}\btnm{\nx\test(t)}^2-\tfrac{1}{2}\btnm{\nx\test(0)}^2,
\end{align}
where \eqref{H-est} yields
\begin{align}
    \tfrac{1}{2}\btnm{\nx\test(0)}^2\ls\tnm{\bre(0)}^2=\tnm{\overline\z}^2 \ls \tnm{\z}^2\ls\e^4.
\end{align}
In addition, we bound
\begin{align}
    \e\dbbr{\dt\re,w\cdot\nx\test}&=\e\dbbr{\dt\left(\ire\right), w\cdot\nx\test}\\
    &=-\e\dbbr{\ire, w\cdot\nx\dt\test}
    +\e\bbrb{\left(\ire\right)(t),w\cdot\nx\test(t)}{x,w}-\e\bbrb{\left(\ire\right)(0),w\cdot\nx\test(0)}{x,w},\no
\end{align}
by
\begin{align}
\e\Babs{\dbbr{\ire, w\cdot\nx\dt\test}}&\ls\e\tnnm{\ire}\btnnm{\nx\dt\test}\ls\tnnm{\ire}^2+\e^2\btnnm{\nx\dt\test}^2,\\
    \e\abs{\bbrb{\left(\ire\right)(t),w\cdot\nx\test(t)}{x,w}}
    &\ls\e\tnm{\left(\ire\right)(t)}\nm{\test(t)}_{H^1}\ls\e\tnm{\re(t)}^2,\\
    \e\abs{\bbrb{\left(\ire\right)(0),w\cdot\nx\test(0)}{x,w}}
    &\ls \e\tnm{\re(0)}^2 =\e\tnm{\z}^2\ls\e^5.
\end{align}
Also, the boundary terms can be controlled using \eqref{kernel 01} and \eqref{G-est}:
\begin{align}\label{boundary-est}
    \abs{\bbr{\re, w\cdot\nx\test}_{\Gamma_+} - \bbr{\g, w\cdot\nx\test}_{\Gamma_-}}&\ls\Big(\tnnms{\re}{\Gamma_+}+\tnnms{\g}{\Gamma_-}\Big)\nm{\nx\test}_{L^2_{t,\p\Omega}}\\
    &\ls\oo\tnnm{\bre}^2+\oo^{-1}\tnnms{\re}{\Gamma_+}^2+\oo^{-1}\e^2.\no
\end{align}
Collecting \eqref{kernel 04} through \eqref{boundary-est}, we deduce that
\begin{align}\label{kernel 05}
    \btnm{\nx\test(t)}^2+\tnnm{\bre}^2\ls&\;\e^2\btnnm{\nx\dt\test}^2+\e\tnm{\re(t)}^2+\oo^{-1}\tnnm{\ire}^2 +\oo^{-1}\tnnms{\re}{\Gamma_+}^2+\oo^{-1}\e^2\\
    &\;+\e^{-1}\abs{\dbr{\ss,\test}}+\abs{\dbbr{\ss,w\cdot\nx\test}}.\no
\end{align}

Next, we estimate the term $\e^2\tnnm{\nx\dt\test}^2$ on the RHS above.
Let $\Test:=\dt\test=(-\Delta_x)^{-1}\dt\bre$ denote the solution to
\begin{align}
\left\{
\begin{array}{l}
-\Delta_x\Test=\dt\bre\ \ \text{ in}\
\ \Omega,\\\rule{0ex}{1.2em}
\Test=0\ \ \quad\text{ on}\ \
\p\Omega.
\end{array}
\right.
\end{align}
Poincar\'e's inequality indicates that
\begin{align}\label{kernel 08}
    \nm{\Test(t)}_{L^2(\Omega)}\ls\nm{\nx\Test(t)}_{L^2(\Omega)}=\bnm{\nx\dt\test(t)}_{L^2(\Omega)}.
\end{align}
Testing \eqref{remainder} against $\weak=\e\Test$ and similar to \eqref{kernel 02} we get
\begin{align}\label{kernel 06}
    \e^2\dbbr{\dt\bre,\Test}-\e\dbbr{\ire,\vw\cdot\nx\Test}=\e\dbr{\ss,\Test}.
\end{align}
Again, integration by parts reveals that
\begin{align}\label{kernel 06-1}
    \e^2\dbbr{\dt\bre,\Test}=
    -\e^2\dbbr{\dx\Test,\Test}=\e^2\tnnm{\nx\Test}^2=\e^2\btnnm{\nx\dt\test}^2.
\end{align}
Also, we have
\begin{align}\label{kernel 06-2}
    \e\abs{\dbbr{\ire,\vw\cdot\nx\Test}}\ls \e\tnnm{\ire}\tnnm{\nx\Test}\ls \oo\e^2\btnnm{\nx\dt\test}^2+\oo^{-1}\tnnm{\ire}^2.
\end{align}
Hence, \eqref{kernel 06} together with \eqref{kernel 06-1} and \eqref{kernel 06-2} yields
\begin{align}\label{kernel 07}
    \e^2\btnnm{\nx\dt\test}^2\ls \oo^{-1}\tnnm{\ire}^2+\e\abs{\dbr{\ss,\Test}}.
\end{align}
We then insert \eqref{kernel 07} into \eqref{kernel 05} and obtain
\begin{align}\label{kernel 09}
    \btnm{\nx\test(t)}^2+\e^2\btnnm{\nx\dt\test}^2+\tnnm{\bre}^2\ls&\;\e\tnm{\re(t)}^2+\oo^{-1}\tnnm{\ire}^2 +\oo^{-1}\tnnms{\re}{\Gamma_+}^2+\oo^{-1}\e^2\\
    &\;+\e^{-1}\abs{\dbr{\ss,\test}}+\abs{\dbbr{\ss,w\cdot\nx\test}}+\e\abs{\dbr{\ss,\Test}}.\no
\end{align}

Now it remains to handle the last three terms above involving the source term $\ss$.
By using \eqref{kernel 01}, \eqref{kernel 08}, and \eqref{S-is-il-est}, we see that
\begin{align}\label{kernel 10}
    &\;\e^{-1}\abs{\dbbr{\sss+\ssd, \test}}+\abs{\dbbr{\sss+\ssd, w\cdot\nx\test}}+\e\abs{\dbbr{\sss+\ssd,\Test}}\\
    \ls&\;\e^{-1}\Big(\tnnm{\sss}+\tnnm{\ssd}\Big)\nnm{\test}_{L^2_tH^1_x}+\e\Big(\tnnm{\sss}+\tnnm{\ssd}\Big)\tnnm{\Test}\no\\
    \ls&\;\e\tnnm{\bre}+\e^3\btnnm{\nx\dt\test}\ls \oo\tnnm{\bre}^2+\oo\e^2\btnnm{\nx\dt\test}^2+\oo^{-1}\e^2.\no
\end{align}
Analogous to \eqref{energy 04}, we may transfer $\ssa$ to $\uu_0$ via integration by parts in $\phi$.
Noting that $\test|_{\p\Omega}=0$, we then invoke Hardy's inequality and \eqref{S-bl-12-est}\eqref{S-bl-3-est}\eqref{BL-est} followed by \eqref{kernel 01} to bound
\begin{align}\label{kernel 11}
    &\,\e^{-1}\Babs{\dbbr{\ssa+\ssb+\ssc, \test}} \\
\ls&\;\e^{-1}\dBbr{\abs{\uu_0}+\abs{\ssb}+\abs{\ssc},\babs{\!\int_0^{\mu}\!\p_{\mu}\test}}
    =\dBbr{\abs{\eta\uu_0}+\abs{\eta\ssb}+\abs{\eta\ssc}, \Babs{\frac{1}{\mu}\int_0^{\mu}\!\p_{\mu}\test}}\no\\
    \ls&\, \Big(\bnnm{\eta\uu_0}_{L^2_tL^2_xL^1_w}+\bnnm{\eta\ssb}_{L^2_tL^2_xL^1_w}+\bnnm{\eta\ssc}_{L^2_tL^2_xL^1_w}\Big) 
    \tnnm{\frac{1}{\mu}\int_0^{\mu}\!\p_{\mu}\test}\no\\
    \ls&\;\e^{\frac{1}{2}}\tnnm{\p_{\mu}\test} \ls\e^{\frac{1}{2}}\nnm{\test}_{L^2_tH^1_x}
    \ls \e^{\frac{1}{2}}\tnnm{\bre}
    \ls\oo\tnnm{\bre}^2+\oo^{-1}\e.\no
\end{align}
In a parallel fashion, we have
\begin{align}\label{kernel 12}
    &\,\Babs{\dbbr{\ssa+\ssb+\ssc, w\cdot\nx\test}} \\
    \ls&\,\dBbr{\abs{\uu_0}+\abs{\ssb}+\abs{\ssc}, \Babs{\big(\nx\test\big)\big|_{\mu=0}+\int_0^{\mu}\!\p_\mu\big(\nx\test\big)}}\no\\
    \ls&\,\dBbr{\abs{\uu_0}+\abs{\ssb}+\abs{\ssc}, \abs{\nx\test}\big|_{\mu=0}}
    +\e\,\dBbr{\abs{\eta\uu_0}+\abs{\eta\ssb}+\abs{\eta\ssc}, \Babs{\frac{1}{\mu}\int_0^{\mu}\!\p_\mu\big(\nx\test\big)}}\no\\
    \ls&\;\e^{\frac{1}{2}}\nm{\nx\test}_{L^2_{t,\p\Omega}}
    +\e^{\frac{3}{2}}\tnnm{\p_\mu\big(\nx\test\big)}
    \ls\e^{\frac{1}{2}}\nm{\nx\test}_{L^2_{t,\p\Omega}}+\e^{\frac{3}{2}}\nnm{\test}_{L^2_tH^2_x}\ls\e^{\frac{1}{2}}\tnnm{\bre}
    \ls\oo\tnnm{\bre}^2+\oo^{-1}\e,\no
\end{align}
and
\begin{align}\label{kernel 13}
    &\;\e\Babs{\dbbr{\ssa+\ssb+\ssc, \Test}} \\
    \ls&\;\e\,\dBbr{\abs{\uu_0}+\abs{\ssb}+\abs{\ssc},\babs{\!\int_0^{\mu}\!\p_\mu\Test}}
    =\e^2\dBbr{\abs{\eta\uu_0}+\abs{\eta\ssb}+\abs{\eta\ssc}, \babs{\frac{1}{\mu}\int_0^{\mu}\!\p_\mu\Test}}\no\\
    \ls&\;\e^2 \Big(\bnnm{\eta\uu_0}_{L^2_tL^2_xL^1_w}+\bnnm{\eta\ssb}_{L^2_tL^2_xL^1_w}+\bnnm{\eta\ssc}_{L^2_tL^2_xL^1_w}\Big) \tnnm{\frac{1}{\mu}\int_0^{\mu}\!\p_\mu\Test}\no\\
    \ls&\;\e^{\frac{5}{2}}\tnnm{\p_{\mu}\Test}
    \ls \e^{\frac{5}{2}}\btnnm{\nx\dt\test}
    \ls\oo\e^2\btnnm{\nx\dt\test}^2+\oo^{-1}\e^3.\no
\end{align}
Collecting \eqref{kernel 10} through \eqref{kernel 13}, we find
\begin{align}\label{kernel 14}
    \e^{-1}\abs{\dbr{\ss,\test}}+\abs{\dbbr{\ss,w\cdot\nx\test}}+\e\abs{\dbr{\ss,\Test}}
    \ls\oo\tnnm{\bre}^2+\oo\e^2\btnnm{\nx\dt\test}^2+\oo^{-1}\e.
\end{align}
Combined with \eqref{kernel 09}, this yields
\begin{align}
    \btnm{\nx\test(t)}^2+\e^2\btnnm{\nx\dt\test}^2+\tnnm{\bre}^2\ls\e\tnm{\re(t)}^2+\oo^{-1}\tnnm{\ire}^2 +\oo^{-1}\tnnms{\re}{\Gamma_+}^2 +\oo^{-1}\e,
\end{align}
and thus \eqref{kernel} follows.
\end{proof}

\bigskip
\section{Diffusive Limit}\label{Sec:diffusive-limit}

We are now ready to prove our main result for the in-flow boundary problem:

\begin{proof}[Proof of Theorem~\ref{main theorem}] 
The well-posedness of \eqref{transport} is rather standard and was addressed already in \cite{Bensoussan.Lions.Papanicolaou1979, Bardos.Santos.Sentis1984, AA016}.
The existence of $\u_0$ is guaranteed in Section~\ref{Subsec:matching}.
This leaves us to demonstrate the approximation estimate \eqref{main}.

Given the interior solutions $\u_0,\u_1,\u_2$, the initial layers $\uuu_0, \uuu_1$, and the boundary layer $\uu_0$ constructed in Section~\ref{Sec:asymptotic}, the remainder estimate (Theorem~\ref{thm:remainder-est}) implies
\begin{align}\label{main 01}
    \tnnm{u^{\e}-\u_0-\e\u_1-\e^2\u_2-\uuu_0-\e\uuu_1-\uu_0}\ls\e^{\frac{1}{2}}.
\end{align}    
On the other hand,
using the estimates recalled in Proposition~\ref{prop:wellposedness}, we see that
\begin{align}\label{main 02}
    \tnnm{\e\u_1+\e^2\u_2}\ls \e,\qquad
    \tnnm{\uuu_0+\e\uuu_1}\ls\e,\qquad
    \tnnm{\uu_0}\ls\e^{\frac{1}{2}},
\end{align}
where we take into account the temporal rescaling $\tau=\e^{-2}t$ for the initial layer and the normal spatial rescaling $\eta=\e^{-1}\mn$ for the boundary layer.
Consequently, \eqref{main} follows by combining \eqref{main 01} with \eqref{main 02}.
This completes the proof of the theorem.
\end{proof}

\bigskip
\section{Diffuse-Reflection Boundary Problem}\label{Sec:diffuse-BC}

Consider the unsteady neutron transport equation with the diffuse-reflection boundary condition:
\begin{align}\label{transport=}
\left\{
\begin{array}{l}\displaystyle
\e\dt u^{\e}+\vw\cdot\nabla_x u^{\e}+\e^{-1}\big(u^{\e}-\overline{u}^{\e}\big)=0\ \ \text{ in}\ \ \rp\times\Omega\times\s^2,\\\rule{0ex}{1.5em}
u^{\e}(0,\vx,\vw)=u_o(\vx,\vw)\ \ \text{ in}\ \  \Omega\times\s^2,\\\rule{0ex}{1.5em}
u^{\e}(t,\vx_0,\vw)=\pp[u^{\e}](t,\vx_0)+\e h(t,\vx_0,\vw)\ \ \text{ for}\
\ \vx_0\in\p\Omega\ \ \text{and}\ \ \vw\cdot\vn(\vx_0)<0.
\end{array}
\right.
\end{align}
Here, $\pp[u^{\e}]$ represents the weighted average over the outgoing boundary, i.e., 
\begin{align}
    \pp[u^{\e}](t,\vx_0):=c_{\gamma}\int_{\vw\cdot\vn>0}u^{\e}(t,\vx_0,\vw)(\vw\cdot\vn)\,\ud\vw,
\end{align}
where the constant $c_{\gamma}=\frac{1}{\pi}$ satisfies the normalization condition
$c_{\gamma}\int_{\vw\cdot\vn>0}(\vw\cdot\vn)\,\ud\vw=1$.
In addition, for the perturbative boundary data $h$, we assume the compatibility condition \eqref{g-compatibility=} 
to make sure $u^{\e}$ has zero flux \eqref{ue-zero-flux} at the boundary.

\subsection{Asymptotic Analysis} 

By analogy with what has gone in Section~\ref{Sec:asymptotic},
we expand the exact solution as
\begin{align}\label{expand=}
u^{\e}=\u+\uuu+\re=\left(\u_0+\e\u_1+\e^2\u_2\right)+\left(\uuu_0+\e\uuu_1\right)+\re.
\end{align}
Here, the initial layer $\uuu:=\uuu_0+\e\uuu_1$ and the corresponding $\il_{0,\infty}$, $\il_{1,\infty}$ can be taken the same as the in-flow case.
To determine the interior solutions, as noted in \cite{AA007}, we should instead enforce the Neumann boundary condition here. 
Specifically, $\u_0$ is solved from 
\begin{align}
\left\{
\begin{array}{l}
\u_0=\bu_0,\quad
\dt\bu_0-\Delta_x\bu_0=0,\\\rule{0ex}{1.5em}
\overline\u_0(0,\vx)=\overline{u}_o(\vx),\\\rule{0ex}{1.5em}
\tfrac{\p\overline\u_0}{\p\vn}(t,\vx_0)=0\ \ \text{ for}\ \
\vx_0\in\p\Omega.
\end{array}
\right.
\end{align}
Then $\u_1$ is solved from
\begin{align}
\left\{
\begin{array}{l}
\u_1=\bu_1-\vw\cdot\nx\u_{0},\quad
\dt\bu_1-\Delta_x\bu_1=0,\\\rule{0ex}{1.5em}
\overline\u_1(0,\vx)=\il_{1,\infty}(\vx),\\\rule{0ex}{1.5em}
\tfrac{\p\overline\u_1}{\p\vn}(t,\vx_0)=0 \ \ \text{ for}\ \
\vx_0\in\p\Omega,
\end{array}
\right.
\end{align}
and $\u_2$ from
\begin{align}
\left\{
\begin{array}{l}
\u_2=\bu_2-\vw\cdot\nx\u_{1}-\dt\u_0,\quad
\dt\bu_2-\Delta_x\bu_2=0,\\\rule{0ex}{1.5em}
\overline\u_2(0,\vx)=0,\\\rule{0ex}{1.5em}
\tfrac{\p\overline\u_2}{\p\vn}(t,\vx_0)=0 \ \ \text{ for}\ \
\vx_0\in\p\Omega.
\end{array}
\right.
\end{align}
Note that we do not need to introduce boundary layers here because the leading-order boundary condition is automatically satisfied by the interior solution $\u_0$ and the next-order boundary trace can be controlled in the remainder estimate. 

Moreover, under the assumption \eqref{assumption-2}, the well-posedness and corresponding regularity estimates as \eqref{U_0-est}--\eqref{U^I_1-est} in Proposition~\ref{prop:wellposedness} still hold.

\subsection{Remainder Estimate}

With a view to achieving the same approximation estimate \eqref{main-2}, we look at the remainder
\begin{align}
    \re:= u^\e - \Big[\left(\u_0+\e\u_1+\e^2\u_2\right)+\left(\uuu_0+\e\uuu_1\right)\Big],
\end{align}
and derive the initial-boundary value problem
\begin{align}\label{remainder=}
\left\{
\begin{array}{l}\displaystyle
\e\dt\re+\vw\cdot\nabla_x \re+\e^{-1}\big(\ire\big)=\ss\ \ \text{ in}\ \ \rp\times\Omega\times\s^2,\\\rule{0ex}{1.5em}
\re(0,\vx,\vw)=\z(\vx,\vw)\ \ \text{ in}\ \ \Omega\times\s^2,\\\rule{0ex}{1.5em}
\re(t,\vx_0,\vw)=\pp[\re](t,\vx_0)+\h(t,\vx_0,\vw)\ \ \text{ for}\
\ \vx_0\in\p\Omega\ \ \text{and}\ \ \vw\cdot\vn(\vx_0)<0,
\end{array}
\right.
\end{align}
where
\begin{align}
    \ss&:= -\e\vw\cdot\nx\uuu_1 -\e^2\vw\cdot\nx\u_2-\e^2\dt\u_1-\e^3\dt\u_2,\\[3pt]
    \z&:=\e^2\vw\cdot\nx\u_1\big|_{t=0} + \e^2\dt\u_0\big|_{t=0},\\[3pt]
    \h&:=\e h -\e\big(1-\pp\big)[\uuu_1]\big|_{\Gamma_-}
    +\e\big(1-\pp\big)[\vw\cdot\nx\u_0]\big|_{\Gamma_-} +\e^2\big(1-\pp\big)[\vw\cdot\nx\u_1]\big|_{\Gamma_-},
\end{align}
with the parallel estimates as in Lemma~\ref{lem:source-est}: 
\begin{align}
    \tnm{\ss} \ls \e^2, \qquad
    \tnm{\z} \ls \e^2, \qquad
    \tnnms{\h}{\Gamma_-}\ls\e.
\end{align}
Also, it can be verified that
\begin{align}\label{temp 1}
    \iint_{\Omega\times\s^2}\z(\vx,\vw) \,\ud\vw\ud\vx=0, \qquad
    \iint_{\Omega\times\s^2}\ss(t,\vx,\vw) \,\ud\vw\ud\vx=0 \ \ \text{ for all}\ \  t\in\rp,
\end{align}
\begin{align} \label{G-compatibility}
    \int_{\vw\cdot\vn<0}\h(t,\vx_0,\vw)(\vw\cdot\vn)\,\ud\vw=0\ \ \text{ for all}\ \  \vx_0\in\p\Omega\ \ \text{and}\ \ t\in\rp,
\end{align}
which yields
\begin{align}\label{R-zero-average}
    \int_{\Omega}\bre(t,x)\,\ud x=0\ \ \text{ for all}\ \  t\in\rp
\end{align}
and 
\begin{align}\label{R-zero-flux}
    \int_{\s^2}\re(t,\vx_0,\vw)(\vw\cdot\vn)\,\ud\vw = 0 
    \ \ \text{ for all}\ \  \vx_0\in\p\Omega\ \ \text{and}\ \ t\in\rp.
\end{align}

Observe that by the orthogonality $\bbr{\pp[\re],\big(1-\pp\big)[\re]}_{\gamma_+}=0$ and the compatibility \eqref{G-compatibility}, the boundary integral equals
\begin{align} \label{pangfufu}
    \int_{\Gamma}\re^2(\vw\cdot\vn)=\tnnms{\re}{\Gamma_+}^2-\btnnms{\pp[\re]+\h}{\Gamma_-}^2=\btnnms{\big(1-\pp\big)[\re]}{\Gamma_+}^2-\tnnms{\h}{\Gamma_-}^2.
\end{align}
Correspondingly,
we have the energy bound
\begin{align}\label{energy=}
    \nnm{\re}_{L^{\infty}_tL^2_{x,w}}^2+\e^{-1}\btnnms{\big(1-\pp\big)[\re]}{\Gamma_+}^2+\e^{-2}\tnnm{\ire}^2\ls \oo\e^{-1}\tnnm{\bre}^2+\oo^{-1}
\end{align}
with a small constant $0<\oo\ll 1$.

Paralleling Lemma~\ref{lem:kernel}, we may deduce the kernel estimate
\begin{align}\label{kernel=}
    \tnnm{\bre}^2\ls\e\nnm{\re}_{L^{\infty}_tL^2_{x,w}}^2+\tnnm{\ire}^2 +\btnnms{\big(1-\pp\big)[\re]}{\Gamma_+}^2+\e,
    \end{align}
by using test functions built from $\test(t,x):=(-\Delta_x)^{-1}\bre(t,x)$ with the Neumann boundary
condition instead.
To be specific, \eqref{R-zero-average} allows us to construct $\test$ via 
\begin{align}
\left\{
\begin{array}{l}
-\Delta_x\test=\bre\ \ \text{ in}\
\ \Omega,\\\rule{0ex}{1.2em}
\tfrac{\p\test}{\p\vn}=0\ \ \text{ on}\ \
\p\Omega,\\\rule{0ex}{1.2em}
\int_{\Omega}\test=0.
\end{array}
\right.
\end{align}
Consequently, we see the boundary terms
\begin{align}
    \int_{\Gamma}\re\test(\vw\cdot\vn) =
    \br{\re, \test}_{\Gamma_+} - \bbr{\pp[\re]+\h, \test}_{\Gamma_-}=0
\end{align}
from \eqref{R-zero-flux},
and 
\begin{align}
\int_{\Gamma}\re \big(w\cdot\nx\test\big)(\vw\cdot\vn) &=
    \bbr{\re, w\cdot\nx\test}_{\Gamma_+} - \bbr{\pp[\re]+\h, w\cdot\nx\test}_{\Gamma_-}\\
    &=\bbr{\big(1-\pp\big)[\re], w\cdot\nx\test}_{\Gamma_+} - \bbr{\h, w\cdot\nx\test}_{\Gamma_-},\no
\end{align}
thanks to $\tfrac{\p\test}{\p\vn}\big|_{\p\Omega} =(n\cdot\nx\test)\big|_{\p\Omega}=0$.
The remaining proof of \eqref{kernel=} then follows from analogous arguments.

Combining \eqref{energy=} with \eqref{kernel=} yields the remainder estimate
\begin{align}
    \nnm{\re}_{L^{\infty}_tL^2_{x,w}}+\e^{-\frac{1}{2}}\btnnms{\big(1-\pp\big)[\re]}{\Gamma_+}+\e^{-\frac{1}{2}}\tnnm{\bre}+\e^{-1}\tnnm{\ire}\ls 1,
\end{align}
and Theorem~\ref{main theorem-2} can thus be proved accordingly.

\bigskip
\section{Specular-Reflection Boundary Problem}\label{Sec:specular-BC}

Consider the unsteady neutron transport equation with the specular-reflection boundary condition:
\begin{align}\label{transport==}
\left\{
\begin{array}{l}\displaystyle
\e\dt u^{\e}+\vw\cdot\nabla_x u^{\e}+\e^{-1}\big(u^{\e}-\overline{u}^{\e}\big)=0\ \ \text{ in}\ \ \rp\times\Omega\times\s^2,\\\rule{0ex}{1.5em}
u^{\e}(0,\vx,\vw)=u_o(\vx,\vw)\ \ \text{ in}\ \  \Omega\times\s^2,\\\rule{0ex}{1.5em}
u^{\e}(t,\vx_0,\vw)=\sr[u^{\e}](t,\vx_0,\vw)+\e h(t,\vx_0,\vw)\ \ \text{ for}\
\ \vx_0\in\p\Omega\ \ \text{and}\ \ \vw\cdot\vn(\vx_0)<0,
\end{array}
\right.
\end{align}
where the specular-reflection operator 
\begin{align}
\sr[u^{\e}](t,\vx_0,\vw):=u^{\e}(t,\vx_0,\rr\vw)
 \quad\text{ with}\quad  \rr\vw:=\vw-2(\vw\cdot\vn)\vn,
\end{align} 
and $h$ is a given perturbation satisfying the compatibility condition \eqref{g-compatibility=},
so that $u^{\e}$ has zero flux at each boundary point.

\subsection{Asymptotic Analysis} 

Let us expand the exact solution as
\begin{align}\label{expand==}
u^{\e}=\u+\uuu+\uu+\re=\left(\u_0+\e\u_1+\e^2\u_2\right)+\left(\uuu_0+\e\uuu_1\right)+\left(\uu_0+\e\uu_1\right)+\re.
\end{align}
Here, the initial layer $\uuu:=\uuu_0+\e\uuu_1$ and the interior solution $\u:=\u_0+\e\u_1+\e^2\u_2$ can be constructed identically to the diffuse-reflection case.

However, unlike the previous scenario, some kind of ``boundary layer'' need to be introduced (if $h\neq 0$) in order to guarantee the perfect specular boundary condition for the remainder.
Specifically, we let $\uu_0=0$ because the leading-order interior solution $\u_0$ already satisfies the specular boundary condition.
We then define $\uu_1$ to be an extension of $h$ into a thin layer of the interior of domain:
\begin{align}
    \uu_1=
    \ge(t;\eta,\iota_1,\iota_2;\phi,\psi):= \chi(\eta) \mathds{1}_{\{\sin\phi>0\}} h(t;\iota_1,\iota_2;\phi,\psi),
\end{align}
and let it act as the next-order boundary layer.

\subsection{Remainder Estimate}

Proceeding in a parallel fashion,
we look at the remainder
\begin{align}
    \re:= u^\e - \Big[\left(\u_0+\e\u_1+\e^2\u_2\right)+\left(\uuu_0+\e\uuu_1\right)+\e\uu_1\Big],
\end{align}
and derive the initial-boundary value problem
\begin{align}\label{remainder==}
\left\{
\begin{array}{l}\displaystyle
\e\dt\re+\vw\cdot\nabla_x \re+\e^{-1}\big(\ire\big)=\ss\ \ \text{ in}\ \ \rp\times\Omega\times\s^2,\\\rule{0ex}{1.5em}
\re(0,\vx,\vw)=\z(\vx,\vw)\ \ \text{ in}\ \ \Omega\times\s^2,\\\rule{0ex}{1.5em}
\re(t,\vx_0,\vw)=\re(t,\vx_0,\rr\vw)\ \ \text{ for}\
\ \vx_0\in\p\Omega\ \ \text{and}\ \ \vw\cdot\vn(\vx_0)<0.
\end{array}
\right.
\end{align}
In particular, $\re$ satisfies the perfect specular boundary condition without any perturbation,
due to the introduction of the ``boundary layer'' $\uu_1$ 
and the observation that
\begin{align}
    \u_0=\sr[\u_0],\qquad\u_1=\sr[\u_1],\qquad \u_2=\sr[\u_2],
    \qquad \uuu_0=\sr[\uuu_0],\qquad \uuu_1=\sr[\uuu_1] 
\end{align}
from the construction of $\u_0,\u_1,\u_2,\uuu_0,\uuu_1$ and the compatibility conditions \eqref{compatibility-3}.

Compared to the diffuse-reflection case, the expression of $\z$ remains the same, while the source term $\ss$ now contains additional terms coming from $\uu_1$
(see \eqref{expand 6} for the transport operator under the boundary coordinates):
\begin{align}
\ss^{B\!L}:=
    -\e^2\dt \uu_1-\e\vw\cdot\nx \uu_1-\big(\uu_1-\overline{\uu_1}\big)
    =: \ss^{B\!L}_a + \ss^{B\!L}_b .
\end{align}
Here, let $\ss^{B\!L}_a:=\e\big(\ssa+\ssb\big)[\uu_1]$ denote the terms corresponding to $\ssa+\ssb$ in the inflow case, which now raise by one order of $\e$ and so $\tnnm{\big(1+\eta\big)\ss^{B\!L}_a}\ls \e^{\frac{3}{2}}$. 
In particular, the term containing $\p_\phi \uu_1$ is still good due to the continuity of $\ge$ across the grazing set where $\vw\cdot\vn=0$ under assumption \eqref{assumption-3} with the compatibility condition $h|_{\Gamma_0}=0$. 
The remaining part 
\begin{align}
\ss^{B\!L}_b:=
    -\sin\phi\, \p_{\eta}\uu_1 - \big(\uu_1-\overline{\uu_1}\big)
    = -\chi'(\eta) \sin\phi\,  \mathds{1}_{\{\sin\phi>0\}} h
    - \chi(\eta)\Big(\mathds{1}_{\{\sin\phi>0\}} h - \overline{\mathds{1}_{\{\sin\phi>0\}} h}\Big) 
\end{align} 
which corresponds to $\ssc$ in the inflow case
now has no cancellation to exploit since it does not satisfy the Milne problem. 
In view of the scaling $\eta=\e^{-1}\mu$, we see $\bnnm{\uu_1}_{L^2}\ls\e^{\frac{1}{2}}$
and those terms in $\ss^{B\!L}_b$ contribute to 
\begin{align}
    \tnnm{\big(1+\eta\big)\ss^{B\!L}}\ls \e^{\frac{1}{2}},
\end{align}
which is acceptable.

Besides, due to the presence of $\uu_1$, we have 
\begin{align}
    \iint_{\Omega\times\s^2}\ss = -\e^2  \iint_{\Omega\times\s^2} \dt \uu_1, 
    \quad\text{ and so }\quad
    \int_{\Omega}\bre = -\e \iint_{\Omega\times\s^2} \uu_1,
\end{align}
considering the null flux of $\re$ and $\uu_1$ under \eqref{g-compatibility=}.
Despite the fact that $\bre$ does not necessarily have zero average, we may redefine 
\begin{align}
    \widetilde\re:=\re-\frac{1}{|\Omega|}\int_{\Omega}\bre,\qquad
    \widetilde\ss:=\ss-\frac{1}{4\pi|\Omega|}\iint_{\Omega\times\s^2}\ss,
\end{align}
so that $\int_{\Omega}\overline{\widetilde\re}=0$, $\iint_{\Omega\times\s^2}\widetilde\ss=0$, and that
$\widetilde\re$ solves \eqref{remainder==} with $\ss$ replaced by $\widetilde\ss$.

Noticing that the difference $\abs{\int_{\Omega}\bre}\ls\e^2$ has higher order of smallness, we only need to obtain the remainder estimate for $\widetilde\re$. For simplicity, we will abuse notation $\re$ for $\widetilde\re$ in what follows.

The specular boundary condition on $\re$ yields
\begin{align}\label{R-zero-flux=}
    \int_{\s^2}\re(t,\vx_0,\vw)(\vw\cdot\vn)\,\ud\vw = 0 
    \ \ \text{ for all}\ \  \vx_0\in\p\Omega\ \ \text{and}\ \ t\in\rp,
\end{align}
as well as
\begin{align}\label{temp 3}
    \int_{\Gamma}\re^2(\vw\cdot\vn)=\tnnms{\re}{\Gamma_+}^2-\tnnms{\re}{\Gamma_-}^2=0.
\end{align}
Also, with the same test functions as in the diffuse-reflection case where $\tfrac{\p\test}{\p\vn}=0$ on $\p\Omega$, the corresponding boundary terms will vanish: 
\begin{align}
    \int_{\Gamma}\re\test(\vw\cdot\vn) &= \br{\re, \test}_{\Gamma_+} - \br{\re, \test}_{\Gamma_-}=0, \\
    \int_{\Gamma}\re \big(w\cdot\nx\test\big)(\vw\cdot\vn) &= \bbr{\re, w\cdot\nx\test}_{\Gamma_+} - \bbr{\re, w\cdot\nx\test}_{\Gamma_-}=0.\label{boundary-term}
\end{align}
Arguing analogously, we thus find
the energy bound
\begin{align}\label{energy==}
    \nnm{\re}_{L^{\infty}_tL^2_{x,w}}^2+\e^{-2}\tnnm{\ire}^2\ls \oo\e^{-1}\tnnm{\bre}^2+\oo^{-1},
\end{align}
and the kernel bound
\begin{align}\label{kernel==}
    \tnnm{\bre}^2\ls\e\nnm{\re}_{L^{\infty}_tL^2_{x,w}}^2+\tnnm{\ire}^2 +\e.
\end{align}
For the kernel estimate, however, the fresh terms $\e^{-1}\abs{\dobr{\ss^{B\!L},\test}}+\abs{\dbbr{\ss^{B\!L},w\cdot\nx\test}}+\e\abs{\dobr{\ss^{B\!L},\Test}}$ in \eqref{kernel 09} require a different treatment, especially for the worst part $\e^{-1}\abs{\dobr{\ss^{B\!L}_b,\test}}$: 
note that the estimate like \eqref{kernel 11} no longer applies since now we do not have $\xi|_{\p\Omega}=0$.
Instead, using integration by parts in $\mu$ and noticing that the boundary term vanishes due to 
$\int_{\s^2} \sin\phi\,\uu_1\big|_{\mu=0}\ud w = \int_{\vw\cdot\vn<0}h(t,\vx_0,\vw)(\vw\cdot\vn)\,\ud\vw=0$ for each $\vx_0\in\p\Omega$ and all $t\in\rp$ (which is precisely the compatibility condition \eqref{g-compatibility=}), we find 
\begin{align}
\e^{-1}\abs{\dobr{\ss^{B\!L}_b,\test}}&=
    \e^{-1}\Babs{\dbbr{\sin\phi\, \p_{\eta}\uu_1+\big(\uu_1-\overline{\uu_1}\big), \test}}=\babs{\dbbr{\sin\phi\,\p_{\mu}\uu_1, \test}}
    =\babs{\dbbr{\sin\phi\,\uu_1, \p_{\mn}\test}}\\
    &\ls
    \bnnm{\uu_1}_{L^2_tL^2_xL^1_w}\tnnm{\p_{\mu}\test} \ls\e^{\frac{1}{2}}\nnm{\test}_{L^2_tH^1_x}
    \ls \e^{\frac{1}{2}}\tnnm{\bre}
    \ls\oo\tnnm{\bre}^2+\oo^{-1}\e.\no
\end{align}
The rest of the terms can be bounded either in a similar way as \eqref{kernel 11}--\eqref{kernel 13}, or even more directly considering the higher order of $\e$ they possess.

Finally, by combining \eqref{energy==} and \eqref{kernel==} we have the remainder estimate
\begin{align}
    \nnm{\re}_{L^{\infty}_tL^2_{x,w}}+\e^{-\frac{1}{2}}\tnnm{\bre}+\e^{-1}\tnnm{\ire}\ls 1.
\end{align}
This leads to the proof of \eqref{main-3} and so that of Theorem~\ref{main theorem-3}.

\bigskip

\bibliographystyle{siam}
\bibliography{Reference}

\end{document}